\newtheorem{theorem}{Theorem}[section]
\newtheorem{lemma}[theorem]{Lemma}
\newtheorem{prop}[theorem]{Proposition}
\newtheorem{proposition}[theorem]{Proposition}
\newtheorem{cor}[theorem]{Corollary}
\newtheorem{letterthm}{Theorem}
\theoremstyle{definition}
\newtheorem{remark}[theorem]{Remark}
\numberwithin{equation}{section}
\newcommand{\Z}{\mathbb{Z}}
\newcommand{\Q}{\mathbb{Q}}
\newcommand{\QQ}{\mathbb{Q}Q}
\newcommand{\ra}{\rangle}
\newcommand{\vb}{{\rm{vb}_1}}
\newcommand{\vbp}{{\rm{vb}_1^{(p)}}}
\newcommand{\FP}{{\rm{FP}}}
\newcommand{\Hom}{{\rm{Hom }}}
\newcommand{\Tor}{{\rm{Tor }}}
\newcommand{\ann}{{\rm{ann}}}
\newcommand{\ns}{\vartriangleleft}
\newcommand{\into}{\rightarrowtail} 
\newcommand{\onto}{\twoheadrightarrow}
\begin{document}

\title{The virtual first  Betti  number of  soluble groups}

\author{Martin R. Bridson}

\address{Mathematical Institute, 
University of Oxford, Andrew Wiles Building, Woodstock Road,
Oxford OX2 6GG,
UK} 
\email{bridson@maths.ox.ax.uk}

\author{Dessislava H. Kochloukova}

\address
{Department of Mathematics, University of Campinas (UNICAMP), 
13083-859 Campinas, SP, Brazil} 
\email{desi@ime.unicamp.br}

\subjclass[2000]{Primary 20F16, 20J05}

\keywords{}

\begin{abstract} We show that if a group $G$ is  finitely presented  and nilpotent-by-abelian-by-finite, then there is an upper bound on $\dim_{\mathbb{Q}} H_1(M, \mathbb{Q})$, where $M$ runs through all subgroups of finite index in $G$.
\end{abstract}

\maketitle

\section{Introduction} 
The {\em virtual first  betti number} of a finitely generated group $G$ is defined as
$$
\vb(G) = \sup \{ \dim H_1(S,\Q) \mid S\le G\text{ of finite index }\}.
$$
A group is said to be {\em large} if it has a subgroup of finite index that maps onto a non-abelian free
group.  If $G$ is large then $\vb(G)=\infty$. It is easy to find finitely generated groups $G$ that are not large
but have $\vb(G)=\infty$. For example, in the metabelian group
 $\Z\wr\Z=\<a,t\mid \forall n, [a,t^{-n}at^n]=1\ra$, the subgroup $S_m<\Z\wr\Z$ generated by $t^m$ and the
 conjugates of $a$ 
has index $m$ and $H_1(S_m,\Z)=\Z^{m+1}$. In contrast, no example is known of a {\em finitely presented} group that is not large
but has  $\vb(G)=\infty$ (cf.~\cite{button}, \cite{lack}).
Since amenable groups do not contain non-abelian free subgroups, one might hope to
resolve this issue by finding a finitely presented amenable group with  $\vb(G)=\infty$, but
this seems to be a non-trivial matter. 

We shall prove in this paper that for large classes of finitely presented
soluble groups  $\vb(G)$ is always finite. One would like to prove that the same is true for all finitely presented soluble groups,
but here one faces the profound difficulty of deciding which soluble groups
admit finite presentations; this is unknown even for  abelian-by-polycyclic and nilpotent-by-abelian groups. 

In the case of metabelian groups, finite
presentability is completely understood in terms of the Bieri-Strebel invariant 
\cite{BieriStrebel}.  
 Some sufficient conditions for finite presentability of  nilpotent-by-abelian groups were considered by Isaac \cite{Isaac} and later Groves \cite{Groves}. In the case of $S$-arithmetic nilpotent-by-abelian groups $G$ one knows
 more thanks to the work of Abels \cite{Abels}: if $G$ is an extension of a nilpotent group $N$ by an abelian group $Q$ then $G$ is finitely presented if and only if 
 it is of type $\FP_2$, which it is if and only if $H_2(N, \mathbb{Z})$ is finitely generated as a $\mathbb{Z} Q$-module (where the $Q$ action is induced by conjugation) and\footnote{throughout this article, $H'$ denotes the commutator subgroup of a group $H$} $G/ N'$ is finitely presented as a group. The first
 of these conditions is an easy consequence of the fact that $\mathbb{Z} Q$ is a Noetherian ring and the second is a corollary of a result of Bieri and Strebel that every metabelian quotient of a group of type $\FP_2$ that does not contain non-cyclic free subgroups is finitely presented \cite{BieriStrebel}. 
The case where $G$ is an extension of an abelian  normal subgroup $A$ by a polycyclic group $Q$ was approached by Brookes and Groves who studied modules  over crossed products of a division ring by a free abelian group \cite{ChrisJohn1}, \cite{ChrisJohn2} and \cite{ChrisJohn3}.  

\medskip 
Given this background, the natural place to begin our investigation into the   virtual first betti number of finitely presented soluble groups
is in the setting of metabelian groups.  Using methods from commutative algebra,  we prove (Theorem \ref{metabeliancase}) that if $G$ is finitely presented and
metabelian, then $\vb(G)$ is finite. (The hypothesis that one actually needs to impose on $G$ is somewhat weaker than finite presentability; see Remark \ref{r:weak-fp}.)
The metabelian case is used in the proof of our main theorem, which is the following.  

\begin{letterthm}\label{n-by-a-by-f}
 Let $G$ be a finitely presented group. If $G$ is nilpotent-by-abelian-by-finite, then $\vb(G)$ is finite.
\end{letterthm}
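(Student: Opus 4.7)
Pass to a finite-index subgroup of $G$ to reduce to the case in which $G$ is itself finitely presented and nilpotent-by-abelian, fitting into a short exact sequence $1 \to N \to G \to Q \to 1$ with $N$ nilpotent of class $c$ and $Q$ finitely generated abelian. Since $G/N'$ is a metabelian quotient of the finitely presented group $G$, the Bieri--Strebel theorem cited in the introduction ensures that $G/N'$ is itself finitely presented, so Theorem \ref{metabeliancase} supplies the bound $\vb(G/N') < \infty$.

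The plan is to prove the theorem by induction on the nilpotency class $c$ of $N$. The base case $c=1$ says $G$ is metabelian, which is Theorem \ref{metabeliancase}. For the inductive step, assume $c \ge 2$ and set $Z := \gamma_c(N)$, the last nontrivial term of the lower central series of $N$. Then $Z$ is abelian, central in $N$, and normal in $G$; because $N^{ab}$ is finitely generated as a $\mathbb{Z}Q$-module, iterated $c$-fold commutators of a finite $\mathbb{Z}Q$-generating set of $N^{ab}$ normally generate $Z$ in $G$, so $G/Z$ is finitely presented nilpotent-by-abelian with nilpotent radical $N/Z$ of class $c-1$, and the induction hypothesis yields $\vb(G/Z) < \infty$.

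For any finite-index subgroup $M \le G$, set $K := M \cap Z$ and apply the five-term exact sequence in rational homology to $1 \to K \to M \to M/K \to 1$:
\[
(K \otimes \mathbb{Q})_{M/K} \longrightarrow H_1(M, \mathbb{Q}) \longrightarrow H_1(M/K, \mathbb{Q}) \longrightarrow 0.
\]
The quotient $M/K$ embeds as a finite-index subgroup of $G/Z$, so $\dim_{\mathbb{Q}} H_1(M/K, \mathbb{Q}) \le \vb(G/Z)$. Since $Z$ is central in $N$, the $M/K$-action on $K$ factors through the finite-index subgroup $Q' := MN/N$ of $Q$, and because $K$ has finite index in $Z$ we have $K \otimes \mathbb{Q} \cong Z \otimes \mathbb{Q}$ as $\mathbb{Q}[Q']$-modules. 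Hence the theorem reduces to showing that $\dim_{\mathbb{Q}}(Z \otimes \mathbb{Q})_{Q'}$ is bounded uniformly as $Q'$ ranges over the finite-index subgroups of $Q$.

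The main obstacle is precisely this uniform coinvariants bound: a general finitely generated $\mathbb{Z}Q$-module can have unbounded rational coinvariants under finite-index subgroups of $Q$ (for example $\mathbb{Z}Q$ itself does). One must exploit the finite presentability of $G$, which via $G/N'$ forces the Bieri--Strebel tameness condition $\Sigma \cup -\Sigma = S(Q)$ on the $\mathbb{Z}Q$-module $A := N/N'$. The plan is to propagate this tameness to $Z$ through the iterated commutator map $A^{\otimes c} \twoheadrightarrow Z$ (equipped with the diagonal $Q$-action), thereby showing that the split metabelian group $Z \rtimes Q$ is finitely presented; Theorem \ref{metabeliancase} applied to $Z \rtimes Q$ then bounds $\dim_{\mathbb{Q}}(Z \otimes \mathbb{Q})_{Q'}$ uniformly, which closes the induction. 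The delicate step, and the chief technical content of the argument, is the transfer of Bieri--Strebel tameness through diagonal tensor products and $\mathbb{Z}Q$-module quotients, without which the induction could not be carried out.
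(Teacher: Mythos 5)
Your reduction to the nilpotent-by-abelian case and the five-term-sequence bookkeeping are fine, but the proof does not close: the two claims on which the induction rests are exactly the hard points and are asserted rather than proved. First, the statement that $Z=\gamma_c(N)$ is normally finitely generated in $G$ (equivalently, finitely generated as a $\Z Q$-module, since $Z$ is central in $N$ so the conjugation action factors through $Q$) does not follow from multilinearity: $Z$ is generated by commutators $[a_{i_1}^{g_1},\dots,a_{i_c}^{g_c}]$ with \emph{independent} conjugating elements $g_1,\dots,g_c$, i.e.\ by the image of all elementary tensors $a_{i_1}q_1\otimes\cdots\otimes a_{i_c}q_c$, and these do not lie in the diagonal $\Z Q$-span of the finitely many $[a_{i_1},\dots,a_{i_c}]$. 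So neither the finite presentability of $G/Z$ (needed to invoke the induction hypothesis) nor the finite generation of $Z$ over $\Z Q$ is established. Second, and more seriously, the step you call the ``chief technical content'' --- propagating $2$-tameness from $A=N/N'$ through $A^{\otimes c}\onto Z$ so that $Z\rtimes Q$ is finitely presented and Theorem \ref{metabeliancase} bounds the coinvariants --- is left as a plan, and it is doubtful as stated: by Bieri--Groves \cite{BG}, finite generation of the $m$-fold diagonal tensor power is equivalent to $m$-tameness of $A$, and $2$-tameness does not imply $m$-tameness for $m\ge 3$, so $A^{\otimes c}$ need not even be finitely generated over $\Z Q$; moreover, even for $c=2$, the absence of antipodal pairs in $\Sigma_A^c(Q)$ does not rule out antipodal pairs among the characters relevant to the tensor square, so $2$-tameness need not pass to quotients of $A\otimes A$. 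Without this transfer the uniform bound on $\dim_{\Q}(Z\otimes\Q)_{Q'}$ --- which you correctly identify as the crux --- is missing, so the argument as written is incomplete.

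For contrast, the paper avoids the lower central series entirely. Proposition \ref{nilpotent}, proved via the Malcev completion, shows that for \emph{every} finite-index subgroup $G_n\le G$ the group $G_n'N'/G_n'$ is torsion, whence $H_1(G_n,\Q)\cong H_1(\overline G_n,\Q)$ for the image $\overline G_n$ in $G/N'$; thus $\vb(G)=\vb(G/N')$ on the nose. Since $G/N'$ is a finitely presented metabelian group by Bieri--Strebel \cite[Thm.~5.5]{BieriStrebel}, Theorem \ref{metabeliancase} finishes the proof with no induction on nilpotency class and no tameness statement about $\gamma_i(N)/\gamma_{i+1}(N)$. If you want to salvage your approach, you would need to prove (or circumvent) precisely the tameness/finite-generation statements above, which the Malcev argument renders unnecessary.
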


Our proof of this theorem
relies on the fact that all metabelian quotients  of soluble groups of type $\FP_2$ are finitely presented \cite[Thm.~5.5]{BieriStrebel}, as well as a technical
result concerning the homology of subgroups of finite index (Proposition \ref{nilpotent}). Groves, Kochloukova and Rodrigues 
 \cite[Thm.~A]{GKR} proved that if an abelian-by-polycyclic group $G$ is of type $\FP_3$ then it is nilpotent-by-abelian-by-finite, in which case $\vb(G)$ is finite
by Theorem \ref{n-by-a-by-f}. The same is true of all
soluble groups of type $\FP_{\infty}$, because they  are constructible \cite{constructible}, hence nilpotent-by-abelian-by-finite, but in this case stronger finiteness results were already known:  constructible soluble groups are obtained from the trivial group by finite sequences of ascending HNN extensions and finite extensions,
from which it follows that they have finite Pr\"ufer rank (i.e.~there is an upper bound on the number of generators for the finitely generated subgroups).

\smallskip

It is natural to wonder if
Theorem A might remain true when the field of rationals $\mathbb{Q}$ in the definition of virtual betti number is replaced with
other coefficient fields, such as the field with $p$ elements $\mathbb{F}_p$. We shall see in Section 5 that it does not.

\medskip
\noindent{\bf Conjecture:} {\em  If $G$ is finitely presented and soluble, then $\vb(G)$ is finite.}
\medskip

It is difficult to construct finitely presented soluble groups  that are not nilpotent-by-abelian-by-finite. The examples
provided by the constructions of Robinson and Strebel  \cite{RS} all satisfy the conjecture.

While editing the final version of this work, we learnt  that Andrei Jaikin-Zapirain has, in unpublished work, also proved
Theorem A in the metabelian case.
Higher dimensional analogues of Theorem A are considered in the forthcoming PhD thesis
of Mokari \cite{Fatemeh}.

\smallskip

{\bf Acknowledgements} 
We thank Fatemeh Mokari and an anonymous referee for helpful comments. We particularly
thank the referee for simplifying the proofs of Proposition 6.2 and Claim 2 in section \ref{commutativealgebra234}.  The work of the first author was supported by grants from the EPSRC
and by a Wolfson Merit Award from the Royal Society; the work of  the 
second author was supported by ``bolsa de produtividade em pesquisa", CNPq, Brazil: we thank all of these organizations.

\section{Preliminary results }

\subsection{Preliminaries  on finitely presented metabelian groups} \label{ss2.1}

We fix a short exact sequence of groups $A \into G \onto Q$,
where $A$ and $Q$ are abelian and $G$ is finitely
generated. The action of $G$ on $A$ by conjugation induces an action of $Q$, which enables us to regard
$A$ as a right $\mathbb{Z} Q$-module. Because $G$ is finitely generated and $Q$ is finitely presented, $A$ is finitely generated as a $\mathbb{Z} Q$-module.

Associated to a non-zero real character $\chi :
Q \to \mathbb{R}$ one has the monoid $$Q_{\chi} = \{ g \in Q
\mid  \chi(g) \geq 0 \}.$$ The character sphere $S(Q)$ is the set
of equivalence classes in $\Hom(Q, \mathbb{R}) \smallsetminus \{ 0
\}$ under the relation that identifies
$\chi_1 \sim \chi_2$ if $\chi_1 = \lambda \chi_2$ for some
$\lambda > 0$. We write $[\chi]$ for
the class of $\chi$.  Following Bieri and Strebel \cite{BieriStrebel}, let
$$\Sigma_A(Q) = \{ [\chi] \mid A \hbox{ is finitely generated
  as a } \mathbb{Z} Q_{\chi}\hbox{-module} \}.
$$
By definition, the $\mathbb{Z} Q$-module $A$ is {\em 2-tame} if $\Sigma_A(Q)^c=S(Q) \smallsetminus
\Sigma_A(Q)$ contains no pair of antipodal points. According to
\cite[Thm.~5.4]{BieriStrebel}, $G$ is finitely presented if and only
if $A$ is a 2-tame  $\mathbb{Z} Q$-module, and this happens
precisely when $G$ is of homological type $\FP_2$. We refer the reader to \cite{BieriQMWbook} for general results concerning  groups of type $\FP_m$.
If $A_1\into A_2\onto A_3$ is an exact sequence of $\mathbb{Z} Q$-modules, then $\Sigma_{A_2}(Q)^c = \Sigma_{A_1}(Q)^c \cup \Sigma_{A_3}(Q)^c$
(see  \cite[Prop.~2.2]{BieriStrebel}),  hence   every quotient of a
2-tame $\mathbb{Z} Q$-module is 2-tame.

\subsection{Tensor products and finite presentability} \label{s:tensor}
Let $R$ be a  noetherian commutative ring with unit $1$ and
let  $W$  be a finitely generated $R Q$-module. As above, we have a Sigma invariant $\Sigma_W(Q) = \{ [\chi]  \mid
W \hbox{ is finitely generated as }R Q_{\chi}-\hbox{module} \}$, and
$W$ is defined to be 2-tame as an $R Q$-module if $\Sigma_W^c(Q) = S(Q) \setminus \Sigma_W(Q)$ has no pair of antipodal points.

The question of when the tensor square $W \otimes_R W$ is finitely generated  as an $R Q$-module (with $Q$ acting diagonally) is addressed in \cite{BG}, where
it is shown that $[\chi]$ lies in $\Sigma_W^c(Q)$ if and only if the ring $S = R Q / \ann_{R Q} (W)$ admits 
a real valuation $v : S \to \mathbb{R} \cup \{ \infty \}$  (in the sense of Bourbaki) that extends $\chi$ and is such that the restriction $v_0$ of $v$ to
 the image $\overline{R}$ of $R$ in $S$ is non-negative and discrete. 
By \cite{BG},  $W \otimes_R W$ is finitely generated  as an $R Q$-module if and only if there is no pair of
antipodal elements $[\chi], - [\chi] \in \Sigma_W^c(Q)$ that can be lifted to valuations of $S$  that have the same restriction $v_0$ to $\overline{R}$, 
with $v_0$ discrete and non-negative. (These last conditions on $v_0$ are automatic if $\overline{R}$ is  $\mathbb{Z}$.)

Returning to the context of paragraph (\ref{ss2.1}), we
apply these general considerations with  $W = A \otimes \mathbb{Q}$ and $R = \mathbb{Q}$, in which case
 $W \otimes_R W \cong (A \otimes_{\mathbb{Z}} A) \otimes_{\mathbb{Z}} \mathbb{Q}$.
We deduce that if there exists a group extension $A \into G \onto Q$, with 
$G$ finitely presented,  then 
$W= A \otimes \mathbb{Q}$ is 2-tame as a $\mathbb{Q} Q$-module, 
and $W \otimes_R W \cong (A \otimes_{\mathbb{Z}} A) \otimes_{\mathbb{Z}} \mathbb{Q}$
 is finitely generated as a $\mathbb{Q} Q$-module via the diagonal $Q$-action.  

We shall also need a refinement of this observation that involves the annihilator $\ann_{\Z Q}(A)$ of $A$ in $\Z Q$,
which we denote $I$. In \cite[(1.3)]{BS2} Bieri and Strebel prove that 
$$
\Sigma_A(Q) = \Sigma_{\mathbb{Z} Q/ I} (Q).
$$
Thus if $A$ is
2-tame as a $\mathbb{Z} Q$-module, then so is  $\mathbb{Z} Q / I$.

\begin{lemma}\label{l:tensor} If there exists a group extension $A \into G \onto Q$ with $A$ and $Q$ abelian and
$G$ finitely presented,  and $I=\ann_{\Z Q}(A)$, then 
$(\Z Q/I) \otimes_{\mathbb{Z}} (\Z Q/I)  \otimes_{\mathbb{Z}} \mathbb{Q}$
 is finitely generated as a $\mathbb{Q} Q$-module via the diagonal $Q$-action. 
\end{lemma}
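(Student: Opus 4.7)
The plan is to mirror the derivation at the end of \S\ref{s:tensor} (which established the analogous statement with $A$ in place of $\mathbb{Z} Q / I$), using the Bieri--Strebel identity $\Sigma_A(Q) = \Sigma_{\mathbb{Z} Q/I}(Q)$ from \cite[(1.3)]{BS2} recalled just before the lemma to transfer 2-tameness from the normal subgroup $A$ to the quotient ring $\mathbb{Z} Q/I$.

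First, because $G$ is finitely presented, $A$ is 2-tame as a $\mathbb{Z} Q$-module by \cite[Thm.~5.4]{BieriStrebel}, and hence so is $\mathbb{Z} Q/I$ by the identity of $\Sigma$-invariants above. Extending scalars, I set $W := (\mathbb{Z} Q/I) \otimes_\mathbb{Z} \mathbb{Q} \cong \mathbb{Q} Q/(I \cdot \mathbb{Q} Q)$ and observe that any $\mathbb{Z} Q_\chi$-generating set of $\mathbb{Z} Q/I$ descends to a $\mathbb{Q} Q_\chi$-generating set of $W$; thus $\Sigma_{\mathbb{Z} Q/I}(Q) \subseteq \Sigma_W(Q)$, and $W$ is 2-tame as a $\mathbb{Q} Q$-module.

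Next I apply the Bieri--Groves criterion from \cite{BG} with $R = \mathbb{Q}$. A direct calculation shows that $\ann_{\mathbb{Q} Q}(W) = I \cdot \mathbb{Q} Q$, so the ring $S$ of the criterion coincides with $W$ itself. Because $W$ is 2-tame, $\Sigma_W^c(Q)$ contains no antipodal pair, so the criterion is satisfied vacuously (no pair of valuations need be ruled out), and we conclude that $W \otimes_{\mathbb{Q}} W$ is finitely generated as a $\mathbb{Q} Q$-module under the diagonal action. The lemma then follows from the canonical identification
\[
W \otimes_{\mathbb{Q}} W \;\cong\; (\mathbb{Z} Q/I) \otimes_{\mathbb{Z}} (\mathbb{Z} Q/I) \otimes_{\mathbb{Z}} \mathbb{Q},
\]
which comes from $(M \otimes_\mathbb{Z} \mathbb{Q}) \otimes_\mathbb{Q} (N \otimes_\mathbb{Z} \mathbb{Q}) \cong (M \otimes_\mathbb{Z} N) \otimes_\mathbb{Z} \mathbb{Q}$.

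I do not anticipate any serious obstacle; the argument is a formal parallel to the $A$-case already treated in \S\ref{s:tensor}, with the Bieri--Strebel identity doing the essential work. The one spot that requires care is verifying that $\ann_{\mathbb{Q} Q}(W) = I \cdot \mathbb{Q} Q$ (and not something strictly larger) so that the ring $S$ appearing in the Bieri--Groves criterion is exactly $W$, and that 2-tameness genuinely passes from $\mathbb{Z} Q/I$ to its rationalisation $W$; both are elementary but are the points at which the argument could slip if transcribed carelessly.
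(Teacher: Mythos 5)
Your proof is correct and is essentially the paper's own (implicit) argument: the paper likewise deduces 2-tameness of $\mathbb{Z} Q/I$ from that of $A$ via $\Sigma_A(Q)=\Sigma_{\mathbb{Z} Q/I}(Q)$, passes to $W=(\mathbb{Z} Q/I)\otimes_{\mathbb{Z}}\mathbb{Q}$ with $R=\mathbb{Q}$, and invokes the Bieri--Groves criterion, which holds vacuously since $\Sigma_W^c(Q)$ has no antipodal pair. The only inessential step is your verification that $\ann_{\mathbb{Q} Q}(W)=I\cdot\mathbb{Q} Q$ (it is true, but irrelevant once the criterion is applied vacuously).
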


\subsection{Preliminaries on commutative algebra}
We will need the following basic facts from commutative algebra; for details see, for example,  \cite{algebrabook}, \cite{algebrabook2} or \cite{eisenbud}. 
Let $Q$ be a finitely generated abelian group and recall that the {\em Krull dimension}
of a commutative ring is the supremum of the lengths of  all chains of prime ideals in the ring.
\begin{enumerate}\label{comm}
\item The radical $\sqrt{J}$ of each ideal
$J\ns\mathbb{Q} Q$  is the intersection of the finitely many prime ideals that contain $J$ and are minimal subject to this condition. 
\item 
Finite dimensional $\mathbb{Q}$-algebras are Artinian and therefore have Krull dimension 0.
\end{enumerate}
Throughout, if $R$ is a commutative ring and $m$ a positive integer, then $R^m$ will denote the subring generated by $m$-th powers, {\bf except}
that $\Z^n$ and $\Q^n$ will denote Cartesian powers. Where no ring is specified, tensor products are assumed to be taken over $\Z$.

\section{A finiteness result in commutative algebra} \label{commutativealgebra234}

Lemma \ref{l:tensor} assures us that the following theorem applies to the modules that arise from short exact sequences 
$N \into G \onto \Z^n$ associated to finitely presented metabelian groups.

\begin{theorem} \label{commutativealgebra} 
Let $Q\cong\mathbb{Z}^n$ be a group and let $S =\mathbb{Z} Q / I$  be a commutative ring such that $(S \otimes_{\mathbb{Z}} S) \otimes_{\mathbb{Z}} \mathbb{Q}$ is finitely generated as a $\mathbb{Q} Q$-module via the diagonal $Q$-action.  Then,
$$\sup_{m}\dim_{\mathbb{Q}}  (S \otimes_{\mathbb{Z} Q^m} \mathbb{Q}) < \infty.$$
\end{theorem}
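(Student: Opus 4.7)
Let $\bar S := S \otimes_{\Z}\Q$, a finitely generated commutative $\Q$-algebra; writing $\Q Q = \Q[t_1^{\pm 1},\ldots,t_n^{\pm 1}]$, we have $\bar S = \Q Q/\bar I$ for the ideal $\bar I = I\cdot \Q Q$. Set $J_m = (t_1^m-1,\ldots,t_n^m - 1) \subset \Q Q$. Unravelling the tensor product gives $S\otimes_{\Z Q^m}\Q \cong \bar S/J_m\bar S$, so the task reduces to bounding $\dim_\Q \bar S/J_m \bar S$ uniformly in $m$.

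The plan is first to reduce to the case where the defining ideal is prime. Since $\bar S$ is finitely generated over the noetherian ring $\Q Q$, a standard result produces a finite filtration $\bar S = M_0 \supset M_1 \supset \cdots \supset M_L = 0$ by $\Q Q$-submodules with $M_i/M_{i+1} \cong \Q Q/\mathfrak q_i$ for prime ideals $\mathfrak q_i \supset \bar I$. Right-exactness of $(-)\otimes_{\Q Q}\Q Q/J_m$ yields $\dim_\Q \bar S/J_m \bar S \leq \sum_i \dim_\Q \Q Q/(\mathfrak q_i + J_m)$. Each $\Q Q/\mathfrak q_i$ is a quotient ring of $\bar S$, so tensoring produces a $\Q Q$-diagonal-equivariant surjection $\bar S \otimes \bar S \onto \Q Q/\mathfrak q_i \otimes \Q Q/\mathfrak q_i$, and the hypothesis descends to the prime quotient. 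Thus it suffices to prove the bound when the defining ideal is a single prime $\mathfrak q$.

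In the prime case, I would use the geometric reformulation: the hypothesis that $\Q Q/\mathfrak q \otimes \Q Q/\mathfrak q$ is finitely generated over $\Q Q$ via the diagonal $q\mapsto q\otimes q$ is dual to the assertion that the multiplication morphism $\mu\colon V\times V \to T$ is finite, where $V = \mathrm{Spec}(\Q Q/\mathfrak q)$ is a closed subvariety of the torus $T = \mathrm{Spec}(\Q Q)$ and $\mu(x,y)=xy$. Such finiteness forbids $V$ from containing any positive-dimensional translate $\zeta H$ of an algebraic subtorus $H$ by a torsion point $\zeta$: on such a coset $\mu(\zeta H\times\zeta H) = \zeta^2 H$, and the fibres of $\mu$ restricted to $\zeta H\times\zeta H$ are isomorphic to $H$, contradicting finite fibres. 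By the Mordell--Lang theorem for tori (Laurent's theorem), $V(\overline{\Q}) \cap \mathrm{tors}(T)$ is therefore a finite set.

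A short local computation completes the proof: at any $\zeta\in T(\overline{\Q})$ of order dividing $m$ one has $t_i^m - 1 = (t_i-\zeta_i)\prod_{\omega^m=1,\,\omega\neq\zeta_i}(t_i-\omega)$, with the second factor a unit in the local ring at $\zeta$, so $J_m$ coincides locally with the maximal ideal of $\zeta$ and $V\cap V(J_m)$ is reduced of length one at each such torsion point. Hence $\dim_\Q \Q Q/(\mathfrak q + J_m)$ equals the number of torsion points on $V$ of order dividing $m$, which is bounded uniformly in $m$ by the total number of torsion points on $V$---a finite quantity by the previous paragraph. Summing over the finitely many primes $\mathfrak q_i$ arising in the filtration yields the required bound. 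The principal obstacle is the invocation of Laurent's theorem; the reduction and the local length calculation are routine commutative algebra.
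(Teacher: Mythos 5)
Your argument is correct (modulo one small point of hygiene noted below), but it follows a genuinely different route from the paper. The paper stays entirely within elementary commutative algebra: it studies the field quotients $F$ of $B_m=\Q Q/J_m$, observes that the image of $Q$ in $F^*$ is generated by a root of unity $\epsilon$ of some order $s$, and uses the hypothesis on the tensor square to get the inequality $\varphi(s)^2\le ds$, which bounds $s$; this produces a single modulus $m_0$ through which every field quotient factors, and a separate radical-plus-B\'ezout argument (using that $\gcd(x^{pm}-1,(x^m-1)^s)=x^m-1$ in characteristic $0$) shows that $J_m=J_{m_0}$ whenever $m_0\mid m$, giving the explicit bound $\dim_\Q(\Q Q/J_m)\le m_0^n$. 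You instead read the hypothesis geometrically as finiteness of the multiplication morphism $\mu\colon V\times V\to T$, rule out positive-dimensional torsion cosets in $V$ by the fibre argument, invoke Laurent's theorem to conclude that $V(\overline\Q)$ contains only finitely many torsion points, and then identify $\dim_\Q\bigl(\Q Q/(\mathfrak q+J_m)\bigr)$ with the number of $m$-torsion points on $V$ by the local computation that $J_m$ generates the maximal ideal at each torsion point. Two remarks on your version: (i) the prime filtration is superfluous --- the coset-exclusion argument and the length computation apply verbatim to $\overline S=\Q Q/\overline I$ itself, since the count of torsion points needs no irreducibility; (ii) you should say explicitly that $\dim_\Q A=\dim_{\overline\Q}(A\otimes_\Q\overline\Q)$, since your local rings live at $\overline\Q$-points while the module in the statement is a $\Q$-vector space (over $\Q$ the closed points are Galois orbits, contributing $[\Q(\zeta):\Q]$ each, which again totals the number of geometric torsion points). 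Also, only the torsion-point case of Mordell--Lang is needed (Mann's theorem on torsion points of subvarieties of tori), so the hammer can be lightened; what the paper's inequality $\varphi(s)^2\le ds$ buys is precisely an elementary, self-contained substitute for that appeal, together with an effective bound $m_0^n$, whereas your approach buys conceptual transparency: the supremum is literally the number of torsion points on $V$ (summed over the filtration, if you keep it).
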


\begin{proof} 
Let $B = S \otimes \mathbb{Q} = \mathbb{Q} Q / J$ and for each positive integer $m$ define $J_m\ns \QQ$
to be $(J, Q^m - 1)$ and
$$B_m:= B \otimes_{\mathbb{Q} Q^m} \mathbb{Q}= \mathbb{Q} Q / J_m  \cong S \otimes_{\mathbb{Z} Q^m}  \mathbb{Q}.$$
As $\mathbb{Q} Q / (Q^m - 1)$ is finite
dimensional over $\mathbb{Q}$, so  is  $B_m = \mathbb{Q} Q /
J_m.$ Hence
$B_m$ has Krull dimension $0$, i.e.~every prime ideal in $B_m$ is a maximal one. Therefore,
the finite collection of primes ideals $P_{m,t}$ whose intersection is $\sqrt{B_m}$ are the only prime ideals in $\mathbb{Q} Q$ above $J_m$, and each 
of the quotients $\mathbb{Q} Q / P_{m,t}$ is a field.

We shall establish the theorem by proving the following:

\medskip
\noindent{\bf Claim 1.} {\em There exist only finitely many fields $F$ such that for some $m \geq 1$ (depending on $F$) the field $F$ is a quotient of $B_m$. }

\medskip
Claim 1 provides an integer $m_0$ such that if a field $F$ is a quotient of $B_m$
then the natural map $\mathbb{Q} Q \to F$ factors through $\mathbb{Q} Q / (Q^{m_0} - 1)$.

\medskip
\noindent{\bf Claim 2.} {\em If $m_0$ divides $m$ then $J_m = J_{m r}$  for every $r \in \mathbb{N}$.}

\medskip
To see that the theorem follows from these claims, note that for an arbitrary positive integer $m$ we have
$J_m \supseteq J_{m m_0} = J_{m_0}$, whence 
$$\dim_{\mathbb{Q}} (\mathbb{Q} Q / J_m) \leq \dim_{\mathbb{Q}} (\mathbb{Q} Q / J_{m_0}) \leq \dim_{\mathbb{Q}} (\mathbb{Q} Q / (Q^{m_0} - 1)) = \dim_{\mathbb{Q}} \mathbb{Q}[Q/ Q^{m_0}] =  m_0^n.$$
\medskip

{\bf Proof of Claim 1.} Our hypothesis on $S$ implies that $B \otimes_{\mathbb Q} B$ is finitely generated as $\mathbb{Q} Q$-module via the diagonal $Q$-action,
by $d$ elements say. Let $F$ be a field quotient of $B_m$ and let
$\theta : \mathbb{Q} Q \to F$ be the canonical projection; so $Q^m - 1 \subseteq \ker(\theta)$. Then $\theta(Q)$ is a finitely generated multiplicative subgroup of $F^*$ 
that has finite exponent and $F$, being finite dimensional over $\mathbb{Q}$,  embeds in $\mathbb{C}$. Hence $\theta(Q)$ is a finite cyclic group,
generated by a root of unity,  $\epsilon$ of order $s$ say.  Thus we obtain a subgroup $H<Q$ such that $Q/ H$ is cyclic of order $s$ and $H-1 \subseteq \ker (\theta)$.
Now, $F \cong \mathbb{Q}[x]/ (f)$, where $f$ is the minimal polynomial of $\epsilon$ over $\mathbb{Q}$. And $f$ is an irreducible factor of $x^s - 1$ in $\mathbb{Q}[x]$,
whose zeroes are distinct roots of unity with order precisely $s$. Thus  
 $\dim_{\mathbb{Q}} F = \deg(f) =  \varphi(s)$, where $\varphi$ is Euler's totient function. On other hand,
$F \otimes_\Q F$ is an epimorphic image of the $\mathbb{Q} Q$-module $B \otimes_\Q B$ and the action of $Q$ on $F \otimes_\Q F$ factors 
through the action of $Q / H$, so $F \otimes_\Q F$ is generated as ${\mathbb{Q}} [Q/ H]$-module by $d$ elements. Hence
$$
\varphi(s)^2 = (\dim_{\mathbb{Q}} F)^2 = \dim_{\mathbb{Q}} (F \otimes_\Q F) \leq d \dim_{\mathbb{Q}} \mathbb{Q} [Q / H] = d s.
$$ 
An elementary calculation shows that   $\varphi(n)/\sqrt{n}\to \infty$ as $n\to\infty$, so for fixed $d$ there are only finitely many possible values of
$s$ and $\epsilon$. Let $b$ be a natural number such that the order of $\epsilon$ is at most $b$. Then the order of $\epsilon$ is a divisor of $m_0 = b!$
and $$F \hbox{ is a quotient of } \mathbb{Q} Q / (Q^{m_0} - 1).$$ 

Since  $\mathbb{Q} Q / (Q^{m_0} - 1)$ is finite dimensional over
$\mathbb{Q}$ it has Krull dimension 0, so has only finitely many prime ideals and finitely many
field quotients. This completes  the proof of Claim 1.

\medskip  

{\bf Proof of Claim 2.}  
Since $m_0$ divides $m$ we have $J_m\subseteq J_{m_0}$, so the prime ideals  containing $J_{m_0}$ also contain $J_{m}$.
On the other hand, we saw earlier that for each of the prime ideals $P_{m,i}$ containing $J_m$, the quotient $F_i:=\mathbb{Q} Q / P_{m,i}$ is a field.
By definition, $m_0$ is such that  $\mathbb{Q} Q \to F_i$ factors through $\mathbb{Q} Q / (Q^{m_0} - 1)$, and therefore $P_{m,i}$ (which
already contains $J\subset J_m$)
contains $J_{m_0}=(J, Q^{m_0} - 1)$.  
The radical of $J_m$ is the intersection of the prime ideals containing it, so
$$
\sqrt{J_{m}} =  \sqrt{J_{m_0}}.
$$

Arguing by induction on $r$, Claim 2 will follow if we can prove that for every prime number $p$ we have
$J_{m} = J_{mp}$, which is equivalent to the assertion that $q^m-1\in J_{mp}$ for all $q\in Q$. {\bf{We fix $q\in Q$.}}

From the preceding argument we have $
\sqrt{J_{m}} =  \sqrt{J_{mp}}.
$
In particular,  $Q^m - 1 \subseteq J_m \subseteq \sqrt{J_m} = \sqrt{J_{mp}}$, so there is a natural number (over which we
have no control) $s$ such that 
\begin{equation} \label{mod-mp1}
(q^m-1)^s \in J_{mp}.
\end{equation}
As $Q^{mp} - 1 \subseteq J_{m p}$, we also have
\begin{equation} \label{mod-pm2}
 q^{mp} - 1  \in \ J_{mp}.
\end{equation}
Let $g(x)$ be the greatest common divisor of $x^{pm} - 1$ and $(x^m - 1)^s$ in $\mathbb{Q}[x]$. In characteristic zero, the polynomial $x^{pm} - 1$ has no repeated roots, so neither does $g(x)$. Since $g(x)$ divides $(x^m - 1)^s$, it must actually divide  $x^m - 1$, so in fact $g(x) = x^m - 1$. 
From (\ref{mod-mp1}), (\ref{mod-pm2}) and   B\'{e}zout's Lemma, we have $g(q) \in J_{pm}$. Since $q\in Q$ is arbitrary, this implies  that $J_{mp} = J_m$.
\end{proof}

\section{The Main Theorem for Metabelian Groups} 

In this section we prove that all finitely presented metabelian groups have finite virtual first betti number.
The proof relies on the finiteness theorem proved in the previous section and two technical lemmas, the first
of which is a simple observation about commensurable groups.

\begin{lemma} \label{finiteindex} Let $G$ be a group. If $G_0<G$ is a subgroup of finite index, then
$\vb(G)=\vb(G_0)$. 
\end{lemma}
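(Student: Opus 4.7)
The plan is to prove the two inequalities $\vb(G_0)\leq\vb(G)$ and $\vb(G)\leq\vb(G_0)$ separately. The first is essentially trivial: any subgroup of finite index in $G_0$ is also of finite index in $G$, so the supremum defining $\vb(G_0)$ is taken over a subfamily of the subgroups ranged over in the definition of $\vb(G)$. Hence
$$\vb(G_0)=\sup\{\dim H_1(S,\Q)\mid S\le G_0 \text{ of finite index}\}\le\vb(G).$$

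For the reverse inequality, I would fix an arbitrary finite-index subgroup $S\le G$ and form $K:=S\cap G_0$. Since $G_0$ has finite index in $G$, the intersection $K$ has finite index in $S$ (and also in $G_0$, so $K$ is one of the subgroups contributing to $\vb(G_0)$). The core of the argument is the standard transfer argument: because $K$ has finite index in $S$, there is a transfer homomorphism $\tau\colon H_1(S,\Q)\to H_1(K,\Q)$ whose composition $i_*\circ\tau$ with the map induced by inclusion $K\hookrightarrow S$ equals multiplication by $[S:K]$ on $H_1(S,\Q)$. Since this integer is invertible in $\Q$, the transfer map $\tau$ is injective, and therefore
$$\dim_{\Q} H_1(S,\Q)\le\dim_{\Q} H_1(K,\Q)\le\vb(G_0).$$
Taking the supremum over all finite-index $S\le G$ yields $\vb(G)\le\vb(G_0)$, completing the proof.

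There is no real obstacle here; the only substantive input is the fact that the transfer map in rational group homology is injective for finite-index subgroups, which is a classical consequence of the Eckmann–Shapiro/transfer formalism. The lemma is stated as a self-contained preliminary because it will be repeatedly invoked in the sequel to reduce questions about $\vb$ to convenient finite-index subgroups (for instance, to pass from a nilpotent-by-abelian-by-finite group to a nilpotent-by-abelian subgroup of finite index).
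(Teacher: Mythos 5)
Your proof is correct and follows essentially the same route as the paper: the easy inequality comes from finite-index subgroups of $G_0$ being finite-index in $G$, and the reverse comes from intersecting an arbitrary finite-index $S\le G$ with $G_0$ and noting that $\dim_{\Q}H_1(S,\Q)\le\dim_{\Q}H_1(S\cap G_0,\Q)$. The only difference is cosmetic: the paper asserts this last inequality without comment, while you justify it explicitly via the injectivity of the rational transfer map, which is exactly the standard fact being used implicitly.
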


\begin{proof} By definition, 
$
\vb(G) = \sup_M  \dim H_1(M,\Q)
$
where the supremum is taken over finite index subgroups of $G$. If $M$ has finite index in $G_0$, then it also has finite index in $G$,
so $\vb(G)\ge \vb(G_0)$. Conversely, if $S$ has finite index in $G$, then $S_0=G_0\cap S$ has finite index in $G_0$, and since
it also has finite index in $S$ we have $\dim H_1(S_0,\Q) \ge \dim H_1(S,\Q) $, so $\vb(G_0)\ge \vb(G)$.
\end{proof}

\begin{lemma} \label{fnew1}
Let  $ A \into G \onto Q$ be a short exact sequence of groups with $A$ and $Q$ abelian and let $n$ be the torsion-free rank of $Q$. Then,
\begin{itemize}
\item[(a)] writing $ [G, A] = \langle \{ [g,a] = g^{-1} a^{-1} g a \ | \ g \in G, a \in A \} \rangle$, we have
$$
\dim_{\mathbb{Q}} H_1(G, \mathbb{Q}) \leq  \dim_{\mathbb{Q}} (A/[G,A] \otimes \mathbb{Q}) + n.
$$
In the split case, $G = A \rtimes Q$, we have
$
H_1(G, \mathbb{Q}) \cong (G/[G, A]) \otimes_{\mathbb{Z}} \mathbb{Q}, 
$ and
 $$
\dim_{\mathbb{Q}} H_1(G, \mathbb{Q}) =  \dim_{\mathbb{Q}} (A/[G,A]\otimes \mathbb{Q}) + n
$$

\item[(b)] If $G_m$ is a subgroup of finite index in $G$ and $Q_m$ is the image of $G_m$ in $Q$, then 
$$
\dim_{\mathbb{Q}} H_1(G_m, \mathbb{Q}) \leq \dim_{\mathbb{Q}} (A \otimes_{\mathbb{Z} Q_m} \mathbb{Q}) + n.
$$
In the split case, $G_m = (A \cap G_m) \rtimes Q_m$,  equality is attained:
$$\dim_{\mathbb{Q}} H_1(G_m, \mathbb{Q})  =  \dim_{\mathbb{Q}} (A \otimes_{\mathbb{Z} Q_m} \mathbb{Q}) + n.
$$

\item[(c)] If $G = A \rtimes Q$ and   
$\mathcal B$ denotes the set of subgroups of finite index in $Q$, then 
$$
\vb(G) = \sup_{S \in {\mathcal B}}  \dim_{\mathbb{Q}} (A \otimes_{\mathbb{Z} S} \mathbb{Q}) + n.
$$
\end{itemize}
\end{lemma}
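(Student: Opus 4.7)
My plan is to handle (a), (b), (c) in order, pivoting on two elementary observations: (i) since $Q = G/A$ is abelian, $G' \subseteq A$; and (ii) the conjugation action of $g \in G$ on $a \in A$ depends only on the image of $g$ in $Q$, so that $[G,A] = (Q-1)\cdot A$ as a subgroup of $A$ (written additively for the $\mathbb{Z} Q$-module $A$).

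For (a), the short exact sequence $A/G' \into G/G' \onto Q$ (the first map being injective by (i)) is preserved by the flat functor $-\otimes_{\mathbb{Z}}\mathbb{Q}$, yielding
$$
\dim_{\mathbb{Q}} H_1(G,\mathbb{Q}) = \dim_{\mathbb{Q}}(A/G')\otimes\mathbb{Q} + n.
$$
Since $[G,A] \subseteq G'$, the group $A/G'$ is a quotient of $A/[G,A]$, which gives the claimed inequality. In the split case $G = A\rtimes Q$, a short computation using that $A$ is abelian shows that every commutator $[a_1 q_1, a_2 q_2]$ reduces to a product of commutators of the form $[q,a]$, so $G' = [G,A]$. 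Then $A/[G,A]$ is central in $G/[G,A]$ and the extension splits, so $G_{ab} = G/[G,A] \cong A/[G,A] \times Q$; tensoring with $\mathbb{Q}$ delivers the equality.

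For (b), I would apply (a) to the restricted sequence $A\cap G_m \into G_m \onto Q_m$, using that $Q_m$ has finite index in $Q$ and so has torsion-free rank $n$. Observation (ii), applied inside $G_m$, identifies $[G_m, A\cap G_m]$ with $(Q_m-1)(A\cap G_m)$, so the bound from (a) becomes
$$
\dim_{\mathbb{Q}} H_1(G_m, \mathbb{Q}) \le \dim_{\mathbb{Q}}(A\cap G_m)\otimes_{\mathbb{Z} Q_m}\mathbb{Q} + n.
$$
To replace $A\cap G_m$ by $A$, note that $A/(A\cap G_m)$ is a finite abelian group (because $[G:G_m]<\infty$), so the inclusion $(A\cap G_m)\otimes\mathbb{Q} \hookrightarrow A\otimes\mathbb{Q}$ is an isomorphism of $\mathbb{Q} Q_m$-modules; using the identification $A\otimes_{\mathbb{Z} Q_m}\mathbb{Q} \cong (A\otimes\mathbb{Q})\otimes_{\mathbb{Q} Q_m}\mathbb{Q}$, the two coinvariant spaces have the same dimension. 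The split case of (b) then follows, since the inequality in (a) becomes equality for $G_m = (A\cap G_m)\rtimes Q_m$.

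Part (c) is an immediate consequence. For any finite-index $M\le G$, the image $Q_M$ lies in $\mathcal{B}$, and (b) gives $\dim_{\mathbb{Q}} H_1(M,\mathbb{Q}) \le \dim_{\mathbb{Q}} A\otimes_{\mathbb{Z} Q_M}\mathbb{Q} + n$, so $\vb(G) \le \sup_{S\in\mathcal{B}}\dim_{\mathbb{Q}} A\otimes_{\mathbb{Z} S}\mathbb{Q} + n$. Conversely, for each $S\in\mathcal{B}$ the subgroup $A\rtimes S \le A\rtimes Q = G$ has finite index, and the split case of (b) yields $\dim_{\mathbb{Q}} H_1(A\rtimes S,\mathbb{Q}) = \dim_{\mathbb{Q}} A\otimes_{\mathbb{Z} S}\mathbb{Q} + n$. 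Taking suprema gives the claimed equality. The only step requiring a moment's care is the passage from $A\cap G_m$ to $A$ in (b); one must know that a finite-index inclusion of $\mathbb{Z} Q_m$-modules becomes an isomorphism after tensoring with $\mathbb{Q}$, and that this survives the base change to $\mathbb{Q} Q_m$ and the subsequent tensor with the trivial module. Everything else is a routine exercise in abelianizations and right-exactness.
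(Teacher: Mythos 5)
Your proposal is correct and takes essentially the same route as the paper: (a) from the extension of $Q$ by the image of $A$ together with $[G,A]\subseteq G'$, (b) by applying (a) to $A\cap G_m \into G_m \onto Q_m$ and using finiteness of $A/(A\cap G_m)$ to replace $A\cap G_m$ by $A$, and (c) by combining the two parts of (b) with the finite-index subgroups $A\rtimes S$. The only cosmetic difference is in the replacement step of (b), where the paper invokes the $\Tor$ long exact sequence over $\mathbb{Z}Q_m$ while you pass to $\mathbb{Q}Q_m$ by flat base change first; both are valid.
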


\begin{proof} (a) 
 As $[G, A] \subseteq [G, G]$, we see that $H_1(G, \mathbb{Z}) = G/ [G, G]$ is a quotient of $G/[G, A]$. 
 So from the central extension $A / [G, A] \into G/ [G, A] \onto Q$ we get
 $$
 \dim_{\mathbb{Q}} H_1(G, \mathbb{Q}) \leq \dim_{\mathbb{Q}} (A/[G,A] \otimes \mathbb{Q}) + \dim_{\mathbb{Q}} (Q \otimes \mathbb{Q}) = 
 \dim_{\mathbb{Q}} (A/[G,A] \otimes \mathbb{Q}) + n.
 $$

If $G = A \rtimes Q$ then, using that $A, Q$ are abelian and $A$ is normal in $G$, we get $[G, G] = [A Q, A Q] = [Q,A] \subseteq [G, A] \subseteq [G,G]$, hence $[G,G] = [G,A]$ and  $A / [G, A] \into G/ [G, G] \onto Q$ is an exact sequence of abelian groups.

\medskip

For (b) we consider the short exact sequence $A_m\into G_m\onto Q_m$, where $A_m=A\cap G_m$. From part (a) we have
\begin{align}\label{start}
\dim_{\mathbb{Q}} H_1(G_m, \mathbb{Q}) \leq \dim_\Q (A_m \otimes_{\Z Q_m} \mathbb{Q}) \, +  \, n,
\end{align}
with equality if the sequence splits. Furthermore, since $A / A_m$ is finite we have
$$
0 = \Tor_1^{\mathbb{Z} Q_m}( A / A_m , \mathbb{Q})  \hbox{ and } (A / A_m) \otimes_{\mathbb{Z} Q_m} \mathbb{Q} = 0.
$$
Thus there is an exact sequence (part of the long exact sequence in $\Tor$ associated to  $A \cap G_m \into A \onto A / (A \cap G_m)$)
$$
0 = \Tor_1^{\mathbb{Z} Q_m}( A / A_m , \mathbb{Q}) \to A_m \otimes_{\mathbb{Z} Q_m} \mathbb{Q} \to A \otimes_{\mathbb{Z} Q_m} \mathbb{Q}
\to (A / A_m) \otimes_{\mathbb{Z} Q_m} \mathbb{Q} = 0,
$$
whence $A_m \otimes_{\mathbb{Z} Q_m} \mathbb{Q} \cong A \otimes_{\mathbb{Z} Q_m} \mathbb{Q}$.
Thus we may replace $A_m \otimes_{\Z Q_m} \mathbb{Q}$ in (\ref{start}) by $A \otimes_{\mathbb{Z} Q_m} \mathbb{Q}$,
and (b) is proved. 
\medskip

(c) From the first part of (b) we have 
$$\vb(G)\le \sup_{S \in {\mathcal B}}  \dim_{\mathbb{Q}} (A \otimes_{\mathbb{Z} S} \mathbb{Q}) + n,$$
and to obtain the reverse inequality we use the second part of (b) 
\begin{equation*}
\sup_{S \in {\mathcal B}}  \dim_{\mathbb{Q}} (A \otimes_{\mathbb{Z} S} \mathbb{Q}) + n = 
\sup_{S \in {\mathcal B}} \dim_{\mathbb{Q}} H_1( A \rtimes S, \mathbb{Q}),
\end{equation*}
noting that $A\rtimes S$ has finite index in $G$.
\end{proof}

\begin{theorem} \label{metabeliancase}
Let $A \into G \onto Q$ be a short exact sequence of groups with $A$ and $Q$ abelian. If $G$ is finitely presented
then its virtual first betti number $\vb(G)$ is finite. 
\end{theorem}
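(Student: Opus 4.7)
The plan is to chain together the results already established to reduce $\vb(G)$ to the quantity controlled by Theorem \ref{commutativealgebra}. First, I would reduce to the case $Q\cong\Z^n$ by passing to the preimage under $G\onto Q$ of the torsion-free quotient of $Q$: this subgroup has finite index in $G$ and by Lemma \ref{finiteindex} has the same virtual first Betti number, so without loss of generality $Q=\Z^n$.

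Now let $G_0$ be any finite-index subgroup of $G$, and let $Q_0$ be its image in $Q$, with $k=[Q:Q_0]$. Lemma \ref{fnew1}(b) gives $\dim_\Q H_1(G_0,\Q)\le \dim_\Q(A\otimes_{\Z Q_0}\Q)+n$. Since $Q^k\subseteq Q_0$, the augmentation ideal of $\Z Q^k$ sits inside that of $\Z Q_0$, so $A\otimes_{\Z Q_0}\Q$ is a quotient of $A\otimes_{\Z Q^k}\Q$. Consequently
\[
\vb(G)\ \le\ \sup_{m}\dim_\Q(A\otimes_{\Z Q^m}\Q)\ +\ n.
\]

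Next, let $I=\ann_{\Z Q}(A)$ and $S=\Z Q/I$. Since $A$ is finitely generated as a $\Z Q$-module it is also finitely generated as an $S$-module, so one can choose a surjection $S^d\twoheadrightarrow A$ of $\Z Q$-modules. Applying the right-exact functor $(-)\otimes_{\Z Q^m}\Q$ gives $\dim_\Q(A\otimes_{\Z Q^m}\Q)\le d\cdot\dim_\Q(S\otimes_{\Z Q^m}\Q)$. Finally, Lemma \ref{l:tensor} (which uses finite presentation of $G$) asserts that $(S\otimes_\Z S)\otimes_\Z\Q$ is finitely generated as a $\Q Q$-module under the diagonal $Q$-action, so Theorem \ref{commutativealgebra} applies and yields $\sup_m\dim_\Q(S\otimes_{\Z Q^m}\Q)<\infty$. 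Combining these inequalities gives the claimed bound $\vb(G)<\infty$.

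The substantive difficulty has already been dealt with in Theorem \ref{commutativealgebra}; the argument above is essentially a bookkeeping exercise that chains the reductions together. The only step requiring any care is the observation that an arbitrary finite-index subgroup $Q_0\le Q$ can be replaced, for purposes of an upper bound, by the far more structured subgroup $Q^{[Q:Q_0]}$, which is precisely why the preliminary reduction to $Q=\Z^n$ (so that Theorem \ref{commutativealgebra} is applicable) is needed at the outset.
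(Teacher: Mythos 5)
Your argument is correct and follows essentially the same route as the paper: reduce to $Q\cong\Z^n$ via Lemma \ref{finiteindex}, bound $\dim_\Q H_1(G_0,\Q)$ by Lemma \ref{fnew1}(b), pass from an arbitrary finite-index $Q_0$ to $Q^{[Q:Q_0]}$, replace $A$ by a quotient of $(\Z Q/I)^{[d]}$, and invoke Lemma \ref{l:tensor} together with Theorem \ref{commutativealgebra}. The only slip is a wording one: you should pass to the preimage of a free abelian finite-index \emph{subgroup} of $Q$ (isomorphic to the torsion-free quotient), not of the quotient itself, and note that this preimage is again finitely presented so that Lemma \ref{l:tensor} applies.
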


\begin{proof} By passing to a subgroup of finite index in $Q$ and replacing $G$ by the inverse image of this subgroup,
we may assume that $Q$ is free abelian. 
Lemma \ref{finiteindex} assures us that it is enough to consider this case, and  Lemma \ref{fnew1}(b)
tells us that we will be done if we can establish an upper bound on  $ 
\dim_{\mathbb{Q}} ( A  
\otimes_{\mathbb{Z} Q_m} \mathbb{Q})$ as $Q_m$ ranges over the subgroups of finite index in $Q$.

Recall that $A$ is finitely generated as a $\mathbb{Z} Q$-module, say by $d$ elements. Thus,
denoting the  annihilator $ \ann_{\mathbb{Z} Q}(A)  = \{ \lambda \in \mathbb{Z} Q \mid A \lambda = 0 \}$ 
by $I$,  we have an epimorphism of $\mathbb{Z} Q$-modules
$$
(\mathbb{Z} Q / I)^{[d]} = \mathbb{Z} Q / I \oplus \ldots \oplus \mathbb{Z} Q / I \to A
$$
that induces an epimorphism of $\mathbb{Q}$-vector spaces
$$
((\mathbb{Z} Q / I) \otimes_{\mathbb{Z} Q_m} \mathbb{Q}   )^{[d]} = (\mathbb{Z} Q / I)^{[d]} \otimes_{\mathbb{Z} Q_m} \mathbb{Q} \to A  
\otimes_{\mathbb{Z} Q_m} \mathbb{Q}.
$$
Thus
$$ 
\dim_{\mathbb{Q}} ( A  
\otimes_{\mathbb{Z} Q_m} \mathbb{Q}) \leq d. \dim_{\mathbb{Q}} ((\mathbb{Z} Q / I) \otimes_{\mathbb{Z} Q_m} \mathbb{Q})
$$
and it suffices to show that 
$$
\sup_{m}  \dim_{\mathbb{Q}} ((\mathbb{Z} Q / I) \otimes_{\mathbb{Z} Q_m} \mathbb{Q}) < \infty.$$
For every $m$ there is a natural number $\alpha_m$ such that $Q^{\alpha_m} \subseteq Q_m$,
and $\mathbb{Z} Q / I \otimes_{\mathbb{Z} Q_m} \mathbb{Q} $ is a quotient of $\mathbb{Z} Q / I \otimes_{\mathbb{Z} Q^{\alpha_m}} \mathbb{Q}$. Thus 
\begin{equation*} \label{eq023}
\dim_{\mathbb{Q}} ((\mathbb{Z} Q / I) \otimes_{\mathbb{Z} Q_m} \mathbb{Q}) \leq \dim_{\mathbb{Q}} ((\mathbb{Z} Q / I) \otimes_{\mathbb{Z} Q^{\alpha_m}} \mathbb{Q}),
\end{equation*}
and we have reduced to showing that 
$$
\sup_s \dim_{\mathbb{Q}} ((\mathbb{Z} Q / I) \otimes_{\mathbb{Z} Q^{s}} \mathbb{Q}) < \infty.
$$
The theorem now follows from Lemma \ref{l:tensor} and Theorem \ref{commutativealgebra}.
\end{proof}

\section{Characteristic $p$ case}

In this section we shall construct examples which show that the restriction to fields of characteristic $0$ in Theorem A is essential, even in the metabelian
case. To this end, we consider
the {\em ${\rm{mod }}  \ p$ virtual first  betti number} of a finitely generated group $G$,
$$
\vbp(G) = \sup \{ \dim H_1(S,\mathbb{F}_p) \mid S<G\text{ of finite index }\}.
$$ 

\begin{proposition} For every prime $p$ there exist finitely presented metabelian groups $\Gamma$ such that $\vbp(\Gamma)$ is infinite.
\end{proposition}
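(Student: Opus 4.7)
Fix a prime $p$. The plan is to construct a split metabelian group $\Gamma = A \rtimes Q$ with $Q \cong \mathbb{Z}^2$ and $A$ a cyclic $\mathbb{F}_p Q$-module, aiming for two properties: (i) $\Gamma$ is finitely presented; (ii) the coinvariants $A \otimes_{\mathbb{F}_p Q^m} \mathbb{F}_p$ have unbounded $\mathbb{F}_p$-dimension as $m$ runs through positive integers. The mod-$p$ analogue of Lemma \ref{fnew1}(b), whose proof in the split case is the same Lyndon--Hochschild--Serre argument as in the paper, gives
\[
\dim_{\mathbb{F}_p} H_1(A \rtimes Q^m, \mathbb{F}_p) \;=\; \dim_{\mathbb{F}_p}\bigl(A \otimes_{\mathbb{F}_p Q^m} \mathbb{F}_p\bigr) + 2,
\]
so (ii) will yield $\vbp(\Gamma) = \infty$.

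Specifically, let $Q = \langle s, t \rangle \cong \mathbb{Z}^2$ act by multiplication on $A = \mathbb{F}_p[s^{\pm 1}, t^{\pm 1}]/(1 + s + t)$; the substitution $t = -1 - s$ identifies $A$ with the ring $\mathbb{F}_p[s^{\pm 1}, (1+s)^{\pm 1}]$. The annihilator of $A$ in $\mathbb{Z} Q$ is $I = (p,\, 1+s+t)$, so $\mathbb{Z} Q / I \cong A$. For (i), by Bieri--Strebel it suffices to show $A$ is $2$-tame as a $\mathbb{Z} Q$-module; via the identity $\Sigma_A(Q) = \Sigma_{\mathbb{Z} Q / I}(Q)$ and the valuation description of $\Sigma^c$ recalled in Section~\ref{s:tensor}, this reduces to listing the discrete valuations of the fraction field $\mathbb{F}_p(s)$ that take a non-zero value on one of $s$ or $1+s$. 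There are exactly three such valuations, $v_s$, $v_{1+s}$, and the valuation at infinity $v_\infty$; they induce on $Q$ the characters $(\chi(s), \chi(t)) = (1, 0),\,(0, 1),\,(-1, -1)$ respectively. These three classes in $S(Q) \cong S^1$ contain no antipodal pair, so $A$ is $2$-tame and $\Gamma$ is finitely presented.

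For (ii) I would take $m = p^k - 1$. If $p$ is odd then $m$ is even, and if $p = 2$ then $-1 = 1$ in $\mathbb{F}_p$; either way $t^m - 1$ reduces to $(1+s)^m - 1$ in $A$, so
\[
A \otimes_{\mathbb{F}_p Q^m} \mathbb{F}_p \;=\; A/(s^m - 1,\, t^m - 1)A \;\cong\; \mathbb{F}_p[s]/\bigl(s^m - 1,\, (1+s)^m - 1\bigr).
\]
Since $\gcd(m, p) = 1$ both generators are separable, and the $m$-th roots of unity in $\overline{\mathbb{F}_p}$ are precisely the elements of $\mathbb{F}_{p^k}^\times$. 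Their common roots are the $\alpha \in \mathbb{F}_{p^k}^\times$ with $1 + \alpha \in \mathbb{F}_{p^k}^\times$, i.e.\ $\alpha \notin \{0, -1\}$, giving a quotient of $\mathbb{F}_p$-dimension $p^k - 2$; letting $k \to \infty$ completes the argument. The main obstacle is step~(i)---selecting the right cyclic module and carrying out the valuation calculation that establishes $2$-tameness---after which step~(ii) reduces to a direct root-counting computation.
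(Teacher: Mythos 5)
Your construction is correct, and the overall strategy matches the paper's (a split extension $A\rtimes Q$ with $Q\cong\Z^2$ and $A$ a cyclic $\mathbb{F}_pQ$-module given by one relation, finite presentability via a three-point $\Sigma^c$ with no antipodal pair, and the split case of Lemma \ref{fnew1}(b) with $\mathbb{F}_p$-coefficients). But the instantiation is genuinely different. The paper uses the relation $y=x^2-x+1$ and the subgroups $Q^{p^m}$ of $p$-power index, where the Frobenius identity $(x^2-x+1)^{p^m}-1=x^{p^m}(x^{p^m}-1)$ makes the second relation redundant and gives $\dim_{\mathbb{F}_p}A_m=p^m$ on the nose. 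You instead use $A\cong\mathbb{F}_p[s^{\pm1},(1+s)^{\pm1}]$ and the subgroups $Q^{m}$ with $m=p^k-1$, computing the coinvariants by counting common roots in $\mathbb{F}_{p^k}$ to get dimension $p^k-2$; all the steps check out (the sign issue with $t=-(1+s)$ is handled correctly, separability follows from $\gcd(m,p)=1$, and the localisation collapses since $s$ and $1+s$ become units in the quotient). Notably, your module is essentially the one the paper uses in its \emph{second} proposition (the coprime-to-$p$ refinement), and since your subgroups have index $(p^k-1)^2$, coprime to $p$, your root-counting computation in fact proves that stronger statement directly, and with an exact value of the coinvariant dimension rather than the lower bound $r+2$ obtained in the paper via a field quotient. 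What the paper's choice buys in exchange is a particularly clean closed form ($p^m+2$) for subgroups of $p$-power index via the Frobenius trick.
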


\begin{proof}
Let $Q$ be a free abelian group with generators $x$ and $y$ and 
let $A = \mathbb{F}_pQ/ I$, where $I$ is the ideal of $\mathbb{F}_pQ$ generated by $y - x^2 + x - 1$.
Then
$$
A \cong  \mathbb{F}_p[x, x^{-1}, \frac{1}{x^2-x+1}].
$$
Consider
$$
A_m = A \otimes_{\mathbb{Z} Q^{p^m}} \mathbb{F}_p \cong \mathbb{F}_p Q / (I, Q^{p^m} - 1).
$$
Since $(x^2 - x+1)^{p^m} - 1 = x^{2p^m} - x^{p^m} + 1 - 1 = x^{p^m} (x^{p^m} - 1)$, we have
$$
A_m = \mathbb{F}_p[x, x^{-1}, \frac{1}{x^2-x+1}] / (x^{p^m} - 1, (x^2 - x +1)^{p^m} - 1)
 = $$ $$\mathbb{F}_p[x, x^{-1}, \frac{1}{x^2-x+1}] / (x^{p^m} - 1)
 $$ is the localisation 
 $$
 B_m S^{-1}$$
 where $B_m = \mathbb{F}_p[x, x^{-1}] / (x^{p^m} - 1)$ and $S$ is the image of $\{ (x^2 - x + 1)^j \}_{j \geq 1}$ in $B_m$.
Note  that $x^{p^m} -1$ and $x^2 - x +1$ do not have a common root in any finite field extension of $\mathbb{F}_p$, for if $z$ 
were a common root we would have $1 =z^{2p^m} = (z - 1)^{p^m} = z^{p^m} -1 = 0$, which is a contradiction. Thus 
the polynomials $x^{p^m} - 1$ and $ (x^2 - x + 1)^j $ are coprime in $\mathbb{F}_p[x, x^{-1}]$ i.e. generate the whole ring as an ideal, 
and so the elements of $S$ are invertible in $B_m$. Therefore $B_m S^{-1} = B_m$ and
$$
\dim_{\mathbb{F}_p} A_m =  \dim_{\mathbb{F}_p} B_m S^{-1}  =  \dim_{\mathbb{F}_p} B_m = p^m.
$$ 
Now define 
$$
\Gamma = A \rtimes Q \hbox{ and } \Gamma_m = A \rtimes Q^{p^m}.
$$
Then, as  the split case of Lemma \ref{fnew1}(b) (with $\mathbb{F}_p$ coefficients in place of rational ones)
$$
\dim_{\mathbb{F}_p} H_1(\Gamma_m, \mathbb{F}_p) = \dim_{\mathbb{F}_p} A_m  + 2 = p^m + 2,
$$ which tends to infinity with $m$.

By the calculation of $\Sigma_A(Q)$ for $A = \mathbb{F}_p Q/
I$, where the ideal $I$ is 1-generated, \cite[Thm.~5.2]{BS2} or by the link between $\Sigma_A^c(Q)$  and valuation theory (as described
in subsection \ref{s:tensor}), we have that
$$
\Sigma_A^c(Q) = \{ [\chi_1], [\chi_2], [\chi_3] \}
$$
$$ \chi_1(x) = 0, \chi_1(y) = 1,
 \chi_2(x) = 1, \chi_2(y) = 0 \hbox{ and } \chi_3(x) = -1, \chi_3(y) = -2.$$
Thus $A$ is 2-tame as $\mathbb{Z} Q$-module, and by the classification of finitely presented metabelian groups in \cite{BieriStrebel}, $\Gamma$ is finitely presented.
\end{proof}

\begin{cor}
There exists a finitely presented metabelian group $G$ such
that for the class $\mathcal A$ of all subgroups of finite
index in $G$ 
$$
\sup_{M \in {\mathcal A}} d(M) = \infty,
$$
where $d(M)$ is the minimal number of generators of $M$.
\end{cor}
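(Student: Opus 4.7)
The plan is to take $G$ to be the finitely presented metabelian group $\Gamma$ constructed in the proposition above. By that result, there is a prime $p$ and a sequence of finite-index subgroups $\Gamma_m \leq \Gamma$ with $\dim_{\mathbb{F}_p} H_1(\Gamma_m, \mathbb{F}_p) = p^m + 2$, which tends to infinity with $m$.

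The remaining ingredient is the standard inequality
\[
d(M) \;\geq\; \dim_{\mathbb{F}_p} H_1(M, \mathbb{F}_p)
\]
valid for every finitely generated group $M$ and every prime $p$. This holds because $H_1(M, \mathbb{F}_p) \cong M/[M,M]M^p$ is a quotient of $M$ as a group, and any generating set of $M$ projects to a spanning set of this elementary abelian $p$-group, which has rank equal to its $\mathbb{F}_p$-dimension. Since $\Gamma$ is finitely presented, each $\Gamma_m$ is finitely generated, so $d(\Gamma_m)$ is finite and the inequality applies.

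Putting these together, $d(\Gamma_m) \geq p^m + 2 \to \infty$, so the supremum of $d(M)$ over the finite-index subgroups $M$ of $G := \Gamma$ is infinite, as required. There is no real obstacle here: the construction in the proposition does all the hard work, and the corollary is essentially just a translation from mod $p$ homology to minimal number of generators via the quotient map $M \twoheadrightarrow M/[M,M]M^p$.
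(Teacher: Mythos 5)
Your argument is correct and is exactly the paper's proof: take the group $\Gamma$ from the preceding proposition and apply the standard bound $d(M) \geq \dim_{\mathbb{F}_p} H_1(M, \mathbb{F}_p)$ to the subgroups $\Gamma_m$, whose mod $p$ first Betti numbers tend to infinity. The paper states this in one line; your extra justification of the inequality via the quotient $M \twoheadrightarrow M/[M,M]M^p$ is fine but not a different route.
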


\begin{proof} Immediate, since
$$
d(M) \geq \dim_{\mathbb{F}_p} H_1(M, \mathbb{F}_p).
$$ 
\end{proof}

It is natural to wonder if the lack of finiteness exhibited in the preceding proposition might be avoided by restricting to
subgroups whose index is coprime to $p$. The following refinement shows that this is not the case. 

\begin{proposition}  Let $p$ be a prime.  There exist  finitely presented metabelian groups $G$ such that
$$
\sup \{ \dim_{\mathbb{F}_p} H_1(S,\mathbb{F}_p) \mid S \in {\mathcal{A}}_p \} = \infty,
$$
where
$${\mathcal{A}}_p = \{ S \leq G \mid [G : S] \hbox{ is finite and  coprime to } p \}.$$
\end{proposition}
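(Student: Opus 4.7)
The plan is to modify the construction from the previous proposition, replacing the polynomial $x^2-x+1$ by one whose structure in characteristic $p$ is controlled by the Frobenius, so that subgroups of index \emph{coprime} to $p$ already produce unbounded mod-$p$ first homology. Concretely, I would take $Q=\mathbb{Z}^2=\langle x,y\rangle$ and $A=\mathbb{F}_pQ/(y-x-1)$, so that $A\cong\mathbb{F}_p[x,x^{-1},(x+1)^{-1}]$ with $y$ acting as $x+1$, and put $G=A\rtimes Q$.

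To show that $G$ is finitely presented I would check the 2-tameness of $A$ via the valuation criterion recalled in Section~\ref{s:tensor}. The real valuations of the ring $A\subset\mathbb{F}_p(x)$ correspond to closed points of $\mathbb{P}^1_{\mathbb{F}_p}$, and only those at $x=0$, $x=-1$, and $x=\infty$ yield non-trivial characters, giving
\[
\Sigma^c_A(Q)=\{[(1,0)],[(0,1)],[(-1,-1)]\}.
\]
No two of these classes are antipodal, so $A$ is 2-tame and $G$ is finitely presented by Bieri--Strebel.

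The heart of the argument is the analysis of the subgroups $G_k=A\rtimes H_k$, where $m_k=p^k-1$, $H_k=\langle x^{m_k},y^{m_k}\rangle\le Q$, so that $[G:G_k]=m_k^2$ is coprime to $p$. The Frobenius identity $(x+1)^{p^k}=x^{p^k}+1$ rewrites as
\[
(x+1)^{m_k}-1\;=\;\frac{x^{p^k}-x}{x+1}\;=\;\frac{x\bigl(x^{m_k}-1\bigr)}{x+1}.
\]
Since $(x+1)\mid(x^{m_k}-1)$ in $\mathbb{F}_p[x]$ (immediate for $p=2$, and for $p$ odd because $m_k$ is even), I can write $x^{m_k}-1=(x+1)h(x)$ with $\deg h=m_k-1$; then $(x+1)^{m_k}-1=xh(x)$, and as $\gcd(x,x+1)=1$ we obtain $\gcd\bigl(x^{m_k}-1,(x+1)^{m_k}-1\bigr)=h(x)$. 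Since $\mathbb{F}_p[x]$ is a PID and both $x$ and $x+1$ become units in the quotient,
\[
A\otimes_{\mathbb{F}_pH_k}\mathbb{F}_p\;\cong\;\mathbb{F}_p[x]\big/\bigl(x^{m_k}-1,(x+1)^{m_k}-1\bigr)\;=\;\mathbb{F}_p[x]/h(x),
\]
which has $\mathbb{F}_p$-dimension $m_k-1$.

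The split case of Lemma~\ref{fnew1}(b) goes through verbatim for $\mathbb{F}_p$ coefficients, as already invoked in the previous proposition, and yields
\[
\dim_{\mathbb{F}_p}H_1(G_k,\mathbb{F}_p)=(m_k-1)+2=p^k,
\]
which tends to infinity. Since $[G:G_k]=m_k^2$ is coprime to $p$, every $G_k$ lies in $\mathcal{A}_p$, and the proposition follows. The main obstacle I anticipate is the choice of $g(x)$ so that $A=\mathbb{F}_p[x^{\pm 1},g(x)^{-1}]$ is simultaneously 2-tame \emph{and} admits a Frobenius-type identity forcing $\gcd(x^m-1,g(x)^m-1)$ to have degree tending to infinity along a sequence of $m$ coprime to $p$; the second requirement essentially dictates the choice $g(x)=x+1$, after which the first is a short computation.
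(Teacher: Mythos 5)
Your proposal is correct, and it uses exactly the same group as the paper: $G=A\rtimes Q$ with $A\cong\mathbb{F}_p[x,x^{-1},(x+1)^{-1}]$, $Q\cong\mathbb{Z}^2$ acting by multiplication by $x$ and $x+1$, and the same finite-presentability argument (three non-antipodal points in $\Sigma_A^c(Q)$, then Bieri--Strebel). Where you diverge is in the choice of prime-to-$p$ subgroups and the homology count. The paper takes $Q_r=\ker\bigl(Q\to\mathbb{F}_{p^r}^*\bigr)$, $x\mapsto w$, $x+1\mapsto w+1$ with $w$ a generator of $\mathbb{F}_{p^r}^*$, so $[G:A\rtimes Q_r]=p^r-1$, and gets the bound $\dim_{\mathbb{F}_p}H_1\ge r+2$ simply from the surjection $A\rtimes Q_r\to \mathbb{F}_{p^r}\times\mathbb{Z}^2$; no computation of coinvariants is needed. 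You instead take the congruence subgroups $H_k=Q^{p^k-1}$ (of larger index $(p^k-1)^2$, still coprime to $p$) and compute the coinvariants exactly: the Frobenius identity gives $(x+1)^{m_k}-1=xh(x)$ with $x^{m_k}-1=(x+1)h(x)$, hence $\gcd=h$ and $A\otimes_{\mathbb{F}_pH_k}\mathbb{F}_p\cong\mathbb{F}_p[x]/(h)$ of dimension $m_k-1$, yielding $\dim_{\mathbb{F}_p}H_1(G_k,\mathbb{F}_p)=p^k$ via the split case of Lemma \ref{fnew1}(b) with $\mathbb{F}_p$ coefficients, exactly as in the paper's first proposition. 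Your route buys exact dimensions and is a natural globalization of the paper's argument ($\mathbb{F}_p[x]/(h)$ is the product of all the field quotients $\mathbb{F}_p[a]$, $a\in\mathbb{F}_{p^k}^*\smallsetminus\{-1\}$, one of which is the paper's $F$); the paper's route is shorter because it only needs a lower bound. One small step you leave implicit: to identify $A/(x^{m_k}-1,(x+1)^{m_k}-1)A$ with $\mathbb{F}_p[x]/(h)$ you assert that $x$ and $x+1$ are units modulo $h$; for $x$ this is $h(0)=-1$, and for $x+1$ you should note that $x^{m_k}-1$ divides the separable polynomial $x^{p^k}-x$ and is therefore squarefree, so $(x+1)^2\nmid x^{m_k}-1$ and $h(-1)\neq0$. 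This is a one-line verification (parallel to the coprimality check in the paper's first proposition), so it is an omission rather than a gap.
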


\begin{proof} Let $A = \mathbb{F}_p[x, {\frac 1 x}, {\frac 1 {x+1}}]$ and
let $Q$ be a free abelian group of rank 2 whose generators $x_1,x_2$ act on $A$ as multiplication by
$x$ and $x+1$ respectively. We consider the group  $G = A \rtimes Q$. As an $\mathbb{F}_p[Q]$-module,
$A\cong \mathbb{F}_p[Q]/I$ where $I$ is the ideal generated by $x_2 - x_1-1$, and the
argument given in the preceding proposition shows that $\Sigma_A(Q)^c$ 
consists of precisely $3$ points, no pair of which is antipodal. Therefore
$G$ is finitely presented.

Let $F$ be a finite field with $p^r$ elements, $r \geq 2$. Let $w$ be a generator of the multiplicative group  $F^* = F \smallsetminus \{ 0 \}$.
Let $Q_r$ be the kernel of the homomorphism $Q\to F^*$ defined by $x_1\mapsto w$ and $x_2\mapsto w+1$. Let
$G_r = A\rtimes Q_r$ and note that $|G/G_r|=|Q/Q_r|=p^r-1$ is coprime to $p$.

The ring epimorphism $A\to F$ sending $x$ to $w$ provides an epimorphism of the underlying additive groups which extends to a group-epimorphism
$A\rtimes Q_r\to F \times \Z^2$. Since $\dim_{\mathbb{F}_p} F = r$, it follows that $\dim_{\mathbb{F}_p} H_1(G_r, \mathbb{F}_p) \ge r+2$.
And  $r \geq 2$ was arbitrary.
\end{proof}

\section{Beyond the metabelian case}

In this section we shall prove Theorem A, but first we present
a consequence of Theorem \ref{metabeliancase} that describes what one can deduce about towers of finite-index subgroups
above the commutator subgroup in amenable and related groups.

\begin{prop}
Let $G$ be a group of type $\FP_2$ that does not contain a non-abelian free group
and let $\mathcal{C}$ be the set of finite-index subgroups in $G$ that contain the
commutator subgroup $G'$. Then
 $\sup_{M\in\mathcal{C}} \dim_{\mathbb{Q}} H_1(M, \mathbb{Q}) < \infty$.
\end{prop}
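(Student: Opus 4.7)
The plan is to reduce the statement to Theorem \ref{metabeliancase} applied to the metabelianisation $G/G''$, using the fact that this quotient is finitely presented by the Bieri--Strebel theorem already cited in the excerpt.

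First I would make the elementary observation that every $M \in \mathcal{C}$ is normal in $G$ (since it contains $G'$), and that $G'' = [G',G'] \subseteq [M,M] = M'$ because $G' \subseteq M$. Consequently $(M/G'')' = M'/G''$, and therefore the abelianisations agree:
\[
H_1(M,\Q) \;=\; (M/M')\otimes\Q \;=\; \bigl((M/G'')/(M'/G'')\bigr)\otimes\Q \;=\; H_1(M/G'',\Q).
\]
So the problem is transferred from subgroups of $G$ to subgroups of the metabelian quotient $G/G''$.

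Next I would invoke the cited theorem of Bieri and Strebel \cite[Thm.~5.5]{BieriStrebel}: a group of type $\FP_2$ without non-abelian free subgroups has finitely presented metabelian quotients. In particular, $G/G''$ is a finitely presented metabelian group. Now for each $M \in \mathcal{C}$, the image $M/G''$ is a subgroup of finite index in $G/G''$, so by Theorem \ref{metabeliancase},
\[
\dim_\Q H_1(M/G'',\Q) \;\le\; \vb(G/G'') \;<\; \infty.
\]
Combining this with the identification of the previous paragraph yields $\sup_{M \in \mathcal{C}} \dim_\Q H_1(M,\Q) \le \vb(G/G'') < \infty$, as required.

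There is no serious obstacle; the proof is essentially a bookkeeping reduction. The only point that requires a moment of care is the identity $H_1(M,\Q)=H_1(M/G'',\Q)$, which hinges on the inclusion $G''\subseteq M'$ and hence on the hypothesis $G'\subseteq M$; this is exactly why the statement is restricted to subgroups in $\mathcal{C}$ rather than to arbitrary finite-index subgroups of $G$.
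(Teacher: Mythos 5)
Your argument is correct and coincides with the paper's own proof: both reduce via the inclusion $G''\subseteq M'$ (valid because $G'\subseteq M$) to the metabelian quotient $G/G''$, which is finitely presented by \cite[Thm.~5.5]{BieriStrebel}, and then apply Theorem \ref{metabeliancase} to the finite-index subgroups $M/G''$. Nothing further is needed.
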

\begin{proof} 
By \cite[Thm.~5.5]{BieriStrebel} $G/ G''$ is finitely presented.  Since $M \supseteq G'$ we have $M' \supseteq G''$, hence we can
replace  $G$ by $G/ G''$ and $M$ by $M G''/ G''$ without changing $H_1(M, \mathbb{Q})$. Then we can apply Theorem \ref{metabeliancase}.
\end{proof}

Our proof of Theorem A relies on the following proposition, which is of interest in its own right.

\begin{prop} \label{nilpotent} Let $N \into G \onto Q$ be a short exact sequence of groups, where $N$ is nilpotent, 
$Q$ is abelian and $G$ is finitely generated. Let $G_n$ be a
subgroup of finite index in $G$ and let $\overline{G}_n$ be the
image of $G_n$ in the metabelian group $G / N'$. Then
$$
\dim_{\mathbb{Q}} H_1(G_n, \mathbb{Q}) = \dim_{\mathbb{Q}} H_1(\overline{G}_n,
\mathbb{Q}).
$$
\end{prop}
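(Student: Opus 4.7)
I would argue by induction on the nilpotency class $c$ of $N$. The base case $c=1$ is trivial, since then $N'=1$ and $\overline{G}_n = G_n$. For the inductive step, fix $c\ge 2$ and let $N_c = \gamma_c(N)$ be the last nontrivial term of the lower central series of $N$; it is central in $N$ (hence abelian) and characteristic in $N$, so normal in $G$. Consider the finitely generated group $G/N_c$ with its normal nilpotent subgroup $N/N_c$ of class $c-1$ and abelian quotient $Q$. The image of $G_n$ in $G/N_c$ is $G_nN_c/N_c\cong G_n/(G_n\cap N_c)$, and its further image in $(G/N_c)/(N/N_c)' = G/N'$ equals $\overline{G}_n$. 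The inductive hypothesis therefore gives
\[
\dim_{\mathbb{Q}} H_1\bigl(G_n/(G_n\cap N_c),\mathbb{Q}\bigr) = \dim_{\mathbb{Q}} H_1(\overline{G}_n,\mathbb{Q}).
\]
Thus the proposition reduces to showing that the projection $G_n\onto G_n/(G_n\cap N_c)$ induces an isomorphism on $H_1(-,\mathbb{Q})$; equivalently, that the image of $G_n\cap N_c$ in the abelianization $G_n^{ab}$ is torsion.

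\textbf{Key commutator identity.} Since $N_c$ is central in $N$, for any $a\in N$ and $b\in N_{c-1}$ the commutator $[a,b]$ lies in $N_c$ and is centralised by both $a$ and $b$. A routine induction on $k$ using $[xy,z] = [x,z]^y[y,z]$ then yields
\[
[a^k,b^l] = [a,b]^{kl} \qquad \text{for all integers } k,l.
\]
Because $G_n$ has finite index in $G$, the subgroup $N\cap G_n$ has finite index in $N$, so every element of $N$ has a positive power lying in $G_n$. Hence, given any generator $[a,b]$ of $N_c$ with $a\in N$ and $b\in N_{c-1}$, we may choose $k,l>0$ with $a^k,b^l\in G_n$, and then $[a,b]^{kl}=[a^k,b^l]\in [G_n,G_n]$.

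\textbf{Finishing the argument.} Any $z\in G_n\cap N_c\subseteq N_c$ can be written as a finite product $z=\prod_{i=1}^r [a_i,b_i]^{\varepsilon_i}$ with $a_i\in N$, $b_i\in N_{c-1}$, $\varepsilon_i\in\{\pm 1\}$. For each $i$ pick $k_i,l_i$ as above; setting $K=\prod_i k_il_i$ and using that $N_c$ is abelian,
\[
z^K = \prod_{i=1}^r [a_i,b_i]^{\varepsilon_i K} \in [G_n,G_n].
\]
So every element of $G_n\cap N_c$ has a nontrivial power in $[G_n,G_n]$, which means the map $G_n^{ab}\otimes\mathbb{Q}\to(G_n/(G_n\cap N_c))^{ab}\otimes\mathbb{Q}$ is an isomorphism, completing the induction.

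\textbf{Main obstacle.} The only delicate step is verifying that a power of an arbitrary $z\in G_n\cap N_c$ lies in $[G_n,G_n]$; the potential worry is that $N$ (and hence $N_c$) need not be finitely generated, so one cannot hope for a uniform exponent. This is circumvented by the observation that each \emph{individual} $z$ involves only finitely many generating commutators, and the abelianness of $N_c$ lets us raise them all to a common power. Everything else is a clean lower-central-series induction, driven by the centrality of $N_c$ in $N$ (not in $G$ — that subtlety is precisely what makes the commutator identity the right tool).
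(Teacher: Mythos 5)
Your argument is correct, but it takes a genuinely different route from the paper. The paper establishes the same key fact --- that $G_n'N'/G_n'$ is torsion, so that passing from $G_n$ to its image in $G/N'$ does not change $H_1(-,\mathbb{Q})$ --- in one stroke via the Malcev completion $j\colon N\to N^*$: using that $(G_n\cap N)^*=N^*$ for the finite-index subgroup $G_n\cap N\le N$ and that $(N^*)'=(N')^*$, one gets $(G_n'N')^*=(G_n')^*$, and the defining property that every element of the completion has a power in the group then yields the torsion statement for all of $N'$ simultaneously. You instead peel off the last term $\gamma_c(N)$ of the lower central series one layer at a time, replacing the completion machinery by the elementary identity $[a^k,b^l]=[a,b]^{kl}$ (valid since $\gamma_c(N)$ is central in $N$, so $[a,b]$ commutes with $a$ and $b$) together with the observation that every element of $N$ has a positive power in $G_n\cap N$. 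Your details check out: in the inductive step $(N/\gamma_c(N))'=N'/\gamma_c(N)$, so the metabelianisation of $G/\gamma_c(N)$ is indeed $G/N'$ and the image of $G_nN_c/N_c$ there is $\overline{G}_n$; and raising a finite product of generating commutators to a common exponent is legitimate because $\gamma_c(N)$ is abelian, so each element of $G_n\cap\gamma_c(N)$ has a power in $G_n'$, which is all that is needed after tensoring with $\mathbb{Q}$. What your approach buys is self-containedness --- no appeal to Malcev/Quillen --- at the cost of a slightly longer induction; the paper's argument is shorter once the properties of the completion are granted and treats the whole of $N'$ at once.
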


\begin{proof}  We argue using the Malcev completion $j:N\to N^*$, as defined in \cite{Malcev}.
According to 
\cite[Appendix A, Cor.~3.8]{Quillen}, for any nilpotent group $N$ the
homomorphism $j_N: N \to {N}^*$ is characterized up to isomorphism by
the following properties:

(a) $N^*$ is nilpotent and uniquely divisible;

(b) $\ker j_N$ is the torsion subgroup of $N$;

(c) for every $ x \in {N}^*$ there is a positive integer $n$ such that $x^n \in N$.

In any nilpotent group, the set $\sqrt{S}$ of elements that have powers in a fixed subgroup $S$ is
a subgroup. It follows that, for every subgroup $M<N$, the map $M\to \sqrt{j_N(M)}$ satisfies
properties (a) to (c). Thus we may identify $M^*$ with $\sqrt{j_N(M)}<N^*$. If $M<N$ has finite index
then $M^*=\sqrt{j_N(M)}=N^*$. And $(N^*)' = (N')^*$.

With these facts in hand, for all subgroups of finite index $G_n<G$ we have 
$(G_n')^* \supseteq ((G_n \cap N)')^* = ((G_n \cap N)^*)' = (N^*)' = (N')^*$. Thus $(G_n' N')^* = (G_n')^*$, and from (c) we deduce that $G_n'N'/G_n'$ is torsion. As $G_n'N'/G_n'$ is the kernel of the canonical epimorphism $G_n / G_n' \to G_n N' / G_n' N'$, we have $H_1(G_n, \mathbb{Q}) \cong (G_n / G_n') \otimes \mathbb{Q} \cong (G_n N'/ G_n' N') \otimes \mathbb{Q} \cong  H_1(\overline{G}_n, \mathbb{Q})$ as required.
 \end{proof}
 
\begin{theorem}\label{nilpotent2} Let $N \into G \onto Q$ be a short exact sequence of groups. If $N$ is nilpotent, 
$Q$ is abelian and $G$ is of type $\FP_2$, then the virtual first betti number $\vb(G)$ is finite.
\end{theorem}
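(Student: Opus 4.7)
The plan is to reduce immediately to the metabelian case $G/N'$ and invoke Theorem \ref{metabeliancase}, using Proposition \ref{nilpotent} as the bridge between the homology of a finite-index subgroup $G_n \le G$ and that of its image in $G/N'$. Observe first that $G/N'$ is metabelian: $N/N'$ is abelian and normal in $G/N'$, and the quotient is the abelian group $Q$. Since $G$ is nilpotent-by-abelian it is soluble, hence contains no non-cyclic free subgroup, and the Bieri–Strebel theorem \cite[Thm.~5.5]{BieriStrebel} (already quoted in the introduction) upgrades the $\FP_2$ hypothesis on $G$ to finite presentability of the metabelian quotient $G/N'$. Theorem \ref{metabeliancase} then yields $\vb(G/N') < \infty$.

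Second, for any subgroup $G_n \le G$ of finite index, set $\overline{G}_n := G_nN'/N'$; this has finite index in $G/N'$ since $[G/N' : \overline{G}_n]$ divides $[G : G_n]$. Because $G$ is finitely generated (being $\FP_2$), Proposition \ref{nilpotent} applies verbatim to the given sequence $N \into G \onto Q$ and produces the equality
$$
\dim_{\Q} H_1(G_n, \Q) \;=\; \dim_{\Q} H_1(\overline{G}_n, \Q) \;\le\; \vb(G/N').
$$
Taking the supremum over all finite-index subgroups $G_n$ of $G$ gives $\vb(G) \le \vb(G/N') < \infty$.

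I do not expect any serious obstacle, as the substantive work has already been carried out in the preceding sections. The only points meriting attention are the verification of the hypotheses of the cited results: finite presentability of $G/N'$ (via $\FP_2$ plus absence of non-cyclic free subgroups), finite index of $\overline{G}_n$ in $G/N'$ (elementary), and finite generation of $G$ for Proposition \ref{nilpotent} (automatic from $\FP_2$). The combined statement Theorem A will then follow from this result together with Lemma \ref{finiteindex}, which allows one to pass to a finite-index subgroup handling the ``by-finite'' part.
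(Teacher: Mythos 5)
Your proposal is correct and follows essentially the same route as the paper: Proposition \ref{nilpotent} to transfer $H_1(\cdot,\Q)$ from finite-index subgroups of $G$ to their images in $G/N'$, Bieri--Strebel \cite[Thm.~5.5]{BieriStrebel} to get finite presentability of the metabelian quotient $G/N'$ from the $\FP_2$ hypothesis, and then Theorem \ref{metabeliancase}. The hypothesis checks you flag (finite generation of $G$, finite index of $\overline{G}_n$, absence of non-cyclic free subgroups) are exactly the points implicitly used in the paper's proof.
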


\begin{proof} In the light of the preceding proposition, this follows directly 
from Theorem \ref{metabeliancase} and the fact, proved in \cite[Thm.~5.5]{BieriStrebel}, that $G / N'$ is a finitely presented  metabelian group.
\end{proof}

\begin{cor}[=Theorem A]  \label{main}  If a group $G$ is nilpotent-by-abelian-by-finite group
and of type  $\FP_2$, then $\vb(G)$ is finite.
\end{cor}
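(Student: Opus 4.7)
The plan is to reduce the corollary to Theorem \ref{nilpotent2} via a short passage to a finite-index subgroup, using Lemma \ref{finiteindex} to transfer the conclusion back to $G$.

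First I would unpack the hypothesis that $G$ is nilpotent-by-abelian-by-finite. By definition there is a normal nilpotent subgroup $N\triangleleft G$ such that $G/N$ is abelian-by-finite, and hence contains a characteristic abelian subgroup of finite index. Pulling this abelian subgroup back to $G$ produces a subgroup $G_0<G$ of finite index that contains $N$ as a normal nilpotent subgroup with $G_0/N$ abelian; that is, $G_0$ fits into a short exact sequence $N\into G_0 \onto Q$ of the shape required by Theorem \ref{nilpotent2}.

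Next I would invoke the standard fact that subgroups of finite index in a group of type $\FP_2$ are themselves of type $\FP_2$ (this is immediate from the behaviour of $\FP_2$ under commensurability, via a projective resolution argument). Hence $G_0$ is of type $\FP_2$, so Theorem \ref{nilpotent2} applies and yields $\vb(G_0)<\infty$. Finally, since $G_0$ has finite index in $G$, Lemma \ref{finiteindex} gives $\vb(G)=\vb(G_0)<\infty$, as required.

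There is no real obstacle here: all the substantive work has already been done. The only point worth a moment's care is the precise shape of the subgroup extracted from the hypothesis, to ensure that it is simultaneously of finite index in $G$ and genuinely nilpotent-by-abelian (rather than merely nilpotent-by-(abelian-by-finite)); this is handled by pulling back through the quotient map $G\onto G/N$ as above.
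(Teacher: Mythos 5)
Your proposal is correct and follows essentially the same route as the paper: pass to a finite-index nilpotent-by-abelian subgroup $G_0$, note that $G_0$ inherits type $\FP_2$, apply Theorem \ref{nilpotent2}, and transfer back to $G$ via Lemma \ref{finiteindex}. Your extra care in extracting $G_0$ by pulling back a finite-index abelian subgroup of $G/N$ simply makes explicit a step the paper leaves implicit (and the subgroup need not be characteristic for the argument to work).
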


\begin{proof}
Let $G_0$ be a subgroup of finite index in $G$ 
such that $G_0$ is nilpotent-by-abelian. Then $G_0$ has type $\FP_2$, so $\vb(G_0)$ is finite, by Theorem \ref{nilpotent2},
and hence so is $G$, by Lemma \ref{finiteindex}.
\end{proof}

\begin{remark}\label{r:weak-fp}
We did not use the full force of finite presentability in establishing Theorem A: in fact,
 it is enough to assume that $G$
has a subgroup of finite index $G_0$ in which there is a nilpotent subgroup $N\ns G_0$ such that $Q=G_0/ N $ is free abelian
and, writing   $A = N / N'$, the  $\mathbb{Q} Q$-module  $A \otimes A \otimes \mathbb{Q}$,
with diagonal action, should be finitely generated. These requirements follow from the finite presentability of $G_0 / N'$ but are strictly weaker.
\end{remark}

\begin{cor} Every soluble group of type $\FP_\infty$ has finite virtual first betti number.
\end{cor}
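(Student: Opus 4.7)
The plan is to cite two known structural results and then invoke Corollary~\ref{main} (Theorem~A). First, I would appeal to Kropholler's theorem that every soluble group of type $\FP_\infty$ is constructible \cite{constructible}. This is the crucial step, as it converts a purely homological finiteness hypothesis into a strong structural statement.

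Next, I would use the fact, already recalled in the introduction of the paper, that constructible soluble groups are nilpotent-by-abelian-by-finite. (They are built from the trivial group by a finite sequence of ascending HNN extensions and finite extensions, so in particular they have finite Pr\"ufer rank and the required normal structure.) Hence a soluble group $G$ of type $\FP_\infty$ is nilpotent-by-abelian-by-finite.

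Finally, since type $\FP_\infty$ implies type $\FP_2$, the hypotheses of Corollary~\ref{main} (which is Theorem~A) are satisfied, and we conclude that $\vb(G)$ is finite. The only conceptual obstacle is the appeal to the deep result that soluble $\FP_\infty$ groups are constructible; once that is in hand, the corollary is an immediate consequence of Theorem~A and needs no further argument.
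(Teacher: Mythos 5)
Your argument is correct and is exactly the paper's proof: soluble groups of type $\FP_\infty$ are constructible and hence nilpotent-by-abelian-by-finite by \cite{constructible}, so Theorem A applies since $\FP_\infty$ implies $\FP_2$. Nothing further is needed.
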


\begin{proof}
Soluble groups $S$ of type $\FP_{\infty}$ are constructible and hence nilpotent-by-abelian-by-finite 
\cite{constructible}. 
\end{proof}
 
 \begin{cor} Every abelian-by-polycyclic group of type $\FP_3$ has finite virtual first betti number.
\end{cor}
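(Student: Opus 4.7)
The plan is to reduce the statement to the immediately preceding corollary (Theorem A, Corollary \ref{main}) by combining it with a single structural result from the literature. Specifically, I would invoke the theorem of Groves, Kochloukova and Rodrigues \cite[Thm.~A]{GKR} quoted in the introduction: every abelian-by-polycyclic group of type $\FP_3$ is nilpotent-by-abelian-by-finite. Once this is in hand, the hypothesis that $G$ is of type $\FP_3$ trivially implies $G$ is of type $\FP_2$ (the defining chain condition for $\FP_n$ is monotone in $n$), so $G$ satisfies all the hypotheses of Corollary \ref{main}, and $\vb(G)<\infty$ follows immediately.

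Thus the proof is just a one-line deduction once the two results are assembled. There is no genuine obstacle at this stage of the paper: the substantial work lies in \cite{GKR} (for upgrading the abelian-by-polycyclic hypothesis to nilpotent-by-abelian-by-finite in the presence of $\FP_3$) and in the proof of Theorem A itself (which packages Proposition \ref{nilpotent} with Theorem \ref{metabeliancase}). The role of this corollary is purely to advertise that these two inputs combine to give a statement phrased entirely in terms of soluble groups of type $\FP_3$, without any explicit nilpotent hypothesis.
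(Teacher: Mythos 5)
Your proposal is correct and is exactly the paper's argument: cite \cite{GKR} to conclude that an abelian-by-polycyclic group of type $\FP_3$ is nilpotent-by-abelian-by-finite, note $\FP_3$ implies $\FP_2$, and apply Corollary \ref{main}. Nothing further is needed.
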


\begin{proof} By the main result of \cite{GKR}, abelian-by-polycyclic groups of type $\FP_3$ are
  nilpotent-by-abelian-by-finite.
\end{proof}


\begin{thebibliography}{99}
 \bibitem{Abels} Abels, H.; Finite presentability of S-arithmetic groups. Compact presentability of solvable groups. Lecture Notes in Mathematics, 1261. Springer-Verlag, Berlin, 1987

\bibitem{algebrabook2}   Atiyah, M. F.; Macdonald, I. G. Introduction to commutative algebra. Addison-Wesley Publishing Co., Reading, Mass.-London-Don Mills, Ont. 1969
 

\bibitem{algebrabook}  Bourbaki, N.; Commutative algebra. Chapters 1 - 7.  Elements of Mathematics (Berlin). Springer-Verlag, Berlin, 1998

 

\bibitem{ChrisJohn1} Brookes, C. J. B.; Groves, J. R. J. Modules over nilpotent group rings. J. London Math. Soc. (2) 52 (1995), no. 3, 467 - 481

\bibitem{ChrisJohn2}  Brookes, C. J. B.; Groves, J. R. J. Modules over crossed products of a division ring by a free abelian group. I. J. Algebra 229 (2000), no. 1, 25 - 54

\bibitem{ChrisJohn3} Brookes, C. J. B.; Groves, J. R. J. Modules over crossed products of a division ring by a free abelian group. II. J. Algebra 253 (2002), no. 2, 417 - 445

\bibitem{BG} Bieri, R.; Groves, J. R. J. Tensor powers of modules over finitely generated abelian groups. J. Algebra 97 (1985), no. 1, 68-78

  
 
 \bibitem{BieriStrebel} Bieri, R.; Strebel, R.; Valuations and finitely presented metabelian groups. Proc. London Math. Soc. (3) 41 (1980), no. 3, 439-464

\bibitem{BS2} Bieri, R.; Strebel, R.; A geometric invariant for modules over an abelian group. J. Reine Angew. Math. 322 (1981), 170-189.

\bibitem{BieriQMWbook} Bieri, R.; Homological dimension of discrete groups. Second edition. Queen Mary College Mathematical Notes. Queen Mary College, Department of Pure Mathematics, London, 1981. iv+198 pp

\bibitem{button}
Button, Jack O: Largeness of LERF and 1-relator groups. Groups Geom. Dyn. {\bf 4}
(2010), no. 4, 709--738.

\bibitem{eisenbud} Eisenbud, D.; Commutative algebra. With a view toward algebraic geometry. Graduate Texts in Mathematics, 150. Springer-Verlag, New York, 1995

\bibitem{Groves}  Groves, J. R. J. Some finitely presented nilpotent-by-abelian groups. J. Algebra 144 (1991), no. 1, 127-166
 

\bibitem{GKR} Groves, J. R. J.; Kochloukova, D. H.; Rodrigues, C. F. Abelian-by-polycyclic groups of homological type ${\rm FP}_3$. J. Group Theory 11 (2008), no. 5, 609-621

\bibitem{Isaac} McIsaac,  A.J ; Some conditions for finite presentation of nilpotent-by-abelian groups, Math. Proc. Cambridge Philos. Soc., 95 (1984), pp. 411  - 415


\bibitem{constructible} Kropholler, P. H. Cohomological dimension of soluble groups. J. Pure Appl. Algebra 43 (1986), no. 3, 281 - 287

\bibitem{lack} Lackenby, Marc: Detecting large groups. J. Algebra {\bf 324} (2010),
no. 10, 2636--2657. 
 
 

\bibitem{Malcev} Malcev, A. L.; Nilpotent groups without torsion, Izvest. Akad. Nauk SSSR, ser. Math. 13 (1949), 201-21

\bibitem{Fatemeh} Mokari, F. Y.; PhD thesis, UNICAMP, Brazil, in preparation

\bibitem{Quillen} Quillen, D.; Rational homotopy theory, 90 (1969), 205-295

\bibitem{RS}  Robinson, D. J. S.; Strebel, R.; Some finitely presented soluble groups which are not nilpotent by abelian by finite. J. London Math. Soc. (2) 26 (1982), no. 3, 435-440

 

\end{thebibliography}
 \end{document}